\newtheorem{theorem}{Theorem}[section]
\newtheorem{corollary}[theorem]{Corollary}
\newtheorem{lemma}[theorem]{Lemma}
\newtheorem{proposition}[theorem]{Proposition}
\theoremstyle{definition}
\newtheorem{definition}[theorem]{Definition}
\theoremstyle{remark}
\numberwithin{equation}{section}
\def\R {{\mathbb{R}}}
\def\Z {{\mathbb{Z}}}
\def\3{{|\!|\!|}}
\begin{document}
\title[On equivalence relations induced by TSI Polish groups]{On equivalence relations induced by locally compact TSI Polish groups admitting open identity component}
\author{Yang Zheng}
\address{Academy of Mathematics and Systems Science, Chinese Academy of Sciences, East
Zhong Guan Cun Road No. 55, Beijing 100190, China}
\email{yangz@amss.ac.cn }

\subjclass[2010]{03E15, 22A05, 22D05, 22E15, 46A16}
\keywords{Borel reduction; TSI Polish group; equivalence relation}

\begin{abstract}

For a Polish group $G$, let $E(G)$ be the right coset equivalence relation $G^\omega/c(G)$, where $c(G)$ is the group of all convergent sequences in $G$.

We prove a Rigid theorem on locally compact TSI Polish groups admitting open identity component, as follows: Let $G$ be a locally compact TSI Polish group such that $G_0$ is open in $G$, and let $H$ be a nontrivial pro-Lie TSI Polish group. Then $E(G)\leq_BE(H)$ iff there exists a continuous homomorphism $\phi:G_0\to H_0$ satisfying the following conditions:
\begin{enumerate}
  \item [(i)] $\ker(\phi)$ is non-archimedean;
  \item [(ii)] $\phi{\rm Inn}_G(G_0)\subseteq\overline{{\rm Inn}_H(H_0)\phi}$ under pointwise convergence topology.
\end{enumerate}

 An application of the Rigid theorem yields a negative answer to Question 7.5 of~\cite{DZ}.
\end{abstract}

\maketitle
\section{Introduction}
Equivalence relations appear in various branches of mathematic.
In order to measure the relative complexity of equivalence relations, logicians introduce the concept of Borel reducibility.
A topological space is {\it Polish} if it is separable and completely metrizable. Given two Borel equivalence relations $E$ and $F$ on Polish spaces $X$ and $Y$ respectively, recall that $E$ is {\it Borel reducible} to $F$, denoted $E\leq_B F$, if there exists a Borel map $\theta:X\rightarrow Y$ such that for all $x,y\in X$,
$$xEy\Longleftrightarrow\theta(x)F\theta(y).$$
We denote $E\sim_B F$ if both $E\leq_B F$ and $F\leq_B E$, and denote $E<_B F$ if $E\leq_B F$ and $F\nleq_B E$. For more details on Borel reducibility, we refer to~\cite{gaobook}.

Polish groups are important tools in the research on Borel reducibility. We say that a topological group is {\it Polish} if its topology is Polish. All countable discrete groups or compact metrizable groups are Polish.

For a Polish group $G$, the concept of so called $E(G)$ equivalence relation has been introduced in~\cite{DZ}, which provides us with a new perspective on understanding the structure of Polish groups.
The equivalence relation $E(G)$ on $G^\omega$ is defined by
$$xE(G)y\iff\lim_nx(n)y(n)^{-1}\mbox{ converges in }G$$
for $x,y\in G^\omega$. We say that $E(G)$ is the {\it equivalence relation induced by} $G$.

A series of results regarding $E(G)$ have been discovered in~[2-4], such as some Rigid theorems on connected Polish groups. For example: Let $G$ be a compact connected abelian Polish group and $H$ a locally compact abelian Polish group. Then $E(G)\leq_B E(H)$ iff there is a continuous homomorphism $S:G\rightarrow H$ such that $\ker(S)$ is non-archimedean~(cf.~\cite[Theorem 1.2]{DZlco}). The main purpose of this article is to prove a Rigid theorem on disconnected Polish groups.

A {\it Lie} group is a group which is also a smooth manifold such
that the group operations are smooth functions. A completely metrizable topological
group $G$ is called a {\it pro-Lie} group if every open neighborhood of $1_G$ contains
a closed normal subgroup $N$ such that $G/N$ is a Lie group~(cf.~\cite[Definition 1]{HM07}).
Every locally compact TSI Polish group is pro-Lie~(cf.~\cite[Theorem 3.6]{HMS}). In fact, Using the definition of pro-Lie groups, ~\cite[Theorem 3.34]{HM07} and~\cite[Theorem 3.35]{HM07}, one show that a TSI Polish group is pro-Lie iff it is topologically isomorphic to a closed subgroup of a countable product separable TSI Lie groups.

Let $G$ be a Polish group and let $G_c$ be a closed normal subgroup of $G$.
For $u\in G$, we define a topological group automorphism $\iota_u:G_c\to G_c$ by $\iota_u(g)=ugu^{-1}$, and denote
$${\rm Inn}_G(G_c)=\{\iota_u:u\in G\}.$$

Given an abstract group $G$, the identity element of $G$ is denoted by $1_G$ . If $G$ is a
topological group, the connected component of $1_G$ in $G$~(identity component) is denoted by $G_0$.
The identity component of a Lie group $G$ is open and normal in $G$. Recall that a Polish group $G$ is {\it non-archimedean} if it has a neighborhood
base of {\it $1_G$} consisting of open subgroups.

We establish the main theorem of this article.

\begin{theorem}[Rigid theorem on locally compact TSI Polish groups admitting open identity component]\label{2}
Let $G$ be a locally compact TSI Polish group such that $G_0$ is open in $G$, and let $H$ be a  nontrivial pro-Lie TSI Polish group. Then $E(G)\leq_BE(H)$ iff there exists a continuous homomorphism $\phi:G_0\to H_0$ satisfying the following conditions:
\begin{enumerate}
  \item [(i)] $\ker(\phi)$ is non-archimedean;
  \item [(ii)] $\phi{\rm Inn}_G(G_0)\subseteq\overline{{\rm Inn}_H(H_0)\phi}$ under pointwise convergence topology, i.e., for any $u\in G$, there is some $(h_n)\in H^\omega$ satisfying $\phi(ugu^{-1})=\lim_nh_n\phi(g)h_n^{-1}$ for all $g\in G_0$.
\end{enumerate}
\end{theorem}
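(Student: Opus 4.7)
The plan is to prove the two directions of the biconditional separately. Since $G_0$ is open in $G$, the quotient $G/G_0$ is a countable discrete group, so I can fix a Borel transversal $\{u_i : i \in \N\}$ with $u_0 = 1_G$ and write $G = \bigsqcup_{i \in \N} u_i G_0$. Every $x \in G^\omega$ then has a canonical Borel decomposition $x(n) = u_{i(n)} a_n$ with $a_n \in G_0$. Condition (i) is reminiscent of the connected-abelian Rigid theorem of \cite[Theorem 1.2]{DZlco}, which I would use as a template for handling $G_0$; the genuinely new content of Theorem~\ref{2} lies in condition (ii), which encodes the interaction between $\phi$ and conjugation by elements outside $G_0$.

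For the sufficiency direction, given $\phi$ and, for each $i \in \N$, a witness sequence $h^i = (h^i_n)_n \in H^\omega$ of (ii) for $u_i$ with $h^0_n = 1_H$, I would construct a Borel map $\theta : G^\omega \to H^\omega$ combining $\phi(a_n)$ with the sequences $(h^{i(n)}_n)$. The key identity linking $E(G)$ and $E(H)$ is
\[
  x(n)y(n)^{-1} \;=\; \iota_{u_{i(n)}}(a_n b_n^{-1})\cdot u_{i(n)} u_{j(n)}^{-1},
\]
together with the fact that $xE(G)y$ forces the coset of $u_{i(n)} u_{j(n)}^{-1}$ eventually into a fixed $u_k G_0$. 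Property (ii) then ensures that the conjugation $\iota_{u_{i(n)}}$ is asymptotically implemented inside $H$ by the sequence $(h^{i(n)}_n)$, while property (i) ensures that convergence in $H$ pulls back through the quotient $G_0/\ker(\phi)$ to convergence in $G_0$. Care is needed so that $\theta$ still sends pathological sequences such as constant-in-$n$ coset sequences to $E(H)$-related images; this requires the definition of $\theta$ to arrange cancellations of the factors $h^{i(n)}_n$ whenever $x$ and $y$ share compatible cosets.

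For the necessity direction, assume $\theta : G^\omega \to H^\omega$ is a Borel reduction witnessing $E(G) \leq_B E(H)$. Restricting $\theta$ to $G_0^\omega$ gives $E(G_0) \leq_B E(H)$, and the connected-case machinery developed in \cite{DZ,DZlco} yields a continuous homomorphism $\phi : G_0 \to H_0$ with $\ker(\phi)$ non-archimedean, giving (i). To derive (ii), fix $u \in G$. For each $g \in G_0$ I would construct sequences $x, y \in G^\omega$ whose $E(G)$-behaviour encodes the element $ugu^{-1}$ (and separately $g$), for instance with $x(n)y(n)^{-1}$ converging to $[u, g]$ or to $ugu^{-1}$; the reduction $\theta$ then forces convergence in $H$ of a corresponding product, from which a pointwise witness $(h^{u,g}_n)_n \in H^\omega$ satisfying $h^{u,g}_n \phi(g) (h^{u,g}_n)^{-1} \to \phi(ugu^{-1})$ can be extracted. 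A diagonal argument over a countable dense subset of $G_0$, combined with continuity of $\phi$ and the invariant metric on $H$ (available since $H$ is TSI), should then produce a single sequence $(h^u_n)_n$ that works uniformly for all $g \in G_0$.

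The main obstacle in both directions is condition (ii). In the sufficiency direction, the witness sequences $(h^i_n)_n$ need not converge individually, so $\theta$ must be engineered so that these non-convergent factors cancel whenever $x$ and $y$ stay in compatible cosets, while simultaneously producing a genuine failure of $E(H)$-relatedness when $x$ and $y$ jump between incompatible cosets; threading this needle while keeping $\theta$ Borel is delicate. In the necessity direction, the dual difficulty is promoting the pointwise-in-$g$ witnesses into a single sequence $(h^u_n)$ valid uniformly across $G_0$; this diagonalisation, exploiting the separability of $G_0$ and the invariant metric on $H$, is the technically most demanding step of the entire proof.
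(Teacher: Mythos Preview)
Your outline captures the right high-level split, but the necessity direction contains a genuine gap and the sufficiency direction omits some essential devices.

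\textbf{Necessity.} You propose to obtain, for each $g$ in a countable dense $D\subseteq G_0$, a separate witness sequence $(h^{u,g}_n)_n$ with $h^{u,g}_n\phi(g)(h^{u,g}_n)^{-1}\to\phi(ugu^{-1})$, and then diagonalise to a single $(h^u_n)_n$. But nothing in that setup links the witnesses for different $g$: a diagonal sequence $h^u_n:=h^{u,g_{\sigma(n)}}_{\tau(n)}$ has no reason to converge on any fixed $g_i$, and the existence of a \emph{simultaneous} approximate witness for finitely many $g_i$'s does not follow from individual witnesses without some compactness of the conjugation orbit, which is not available here. The paper avoids this entirely. It passes from the Borel reduction to an \emph{additive} reduction (via \cite[Lemma~2.6]{DZtsi}), obtaining concrete maps $S_{n_j}:F_{n_j}\to H$ with the property that $(S_{n_j}(v))_j$ is a single sequence depending only on $v$, not on $g$. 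The homomorphism $\phi=S$ comes from \cite[Theorem~6.4]{DZtsi}, and the heart of the argument (Lemmas~3.2--3.6) shows $S(vgv^{-1})=\lim_j S_{n_j}(v)S(g)S_{n_j}(v)^{-1}$ by exploiting the strongly NSS property of the Lie factors $H_m$ and a recurrence lemma for isometries of compact neighbourhoods. No diagonalisation is needed because the additive-reduction framework hands you the uniform witness directly; your plan has no substitute for this mechanism.

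\textbf{Sufficiency.} Your idea of making the non-convergent factors $h^{i(n)}_n$ ``cancel'' is not fleshed out, and the paper does not attempt such a cancellation. Instead it works with $E_*(G)$, defines $\psi(x)(n)=\bigl((h^{w_i}_n\phi(g),\,i),\,x(n)\bigr)$ where $x(n)=w_ig$, and reduces to $E_*(H\times\Z)\times E(G;0)$: the $\Z$-coordinate records the coset index and the $E(G;0)$ factor remembers $x(n)$ itself, sidestepping the cancellation problem. Two further ingredients you do not mention are needed: pointwise convergence in (ii) is first upgraded to \emph{uniform} approximation on a precompact neighbourhood (Proposition~2.2, using local compactness of $G$), and the auxiliary $E(G;0)$ factor is absorbed at the end via $[0,1]\hookrightarrow H_0$ and \cite[Lemma~2.9]{DZtsi}. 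Your remark that convergence in $H$ ``pulls back through $G_0/\ker(\phi)$'' is correct in spirit but requires that $\widetilde{\phi}$ be a local homeomorphism, which again uses local compactness of $G$.
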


For the case of $G_0\cong\R^n$, we have a positive answer to Question 7.5 of~\cite{DZ}.
Let $G$ and $H$ be two separable TSI Lie groups such that $G_0\cong H_0\cong \R^n$ and $|{\rm Inn}_H(H_0)|<+\infty$. Then $E(G)\leq_BE(H)$ iff there exists a topological isomorphism $\phi:G_0\rightarrow H_0$ such that $\phi{\rm Inn}_G(G_0)\phi^{-1}\subseteq {\rm Inn}_H(H_0)$(see Corollary~\ref{the Rn finit}). However, it is not true in general.

Let $G$ and $\Lambda$ be two groups, and let $\alpha$ be a homomorphism from $\Lambda$ to ${\rm Aut}(G)$, the group of automorphisms of $G$.
 Recall that the {\it semi product} $\Lambda\ltimes_\alpha G$ is the set $\Lambda\times G$ equipped with group operation as,
$$(\lambda_1,g_1)(\lambda_2,g_2)=(\lambda_1\lambda_2,g_1\alpha(\lambda_1)(g_2)),$$
where $(\lambda_1,g_1),(\lambda_2,g_2)\in\Lambda\times G$.

Note that, if $G$ is a Polish group, $\Lambda$ is a countable discrete group, and $\phi(\lambda)$ is a continuous automorphism on $G$ for each $\lambda$, then $\Lambda\ltimes_\phi G$ equipped with the product topology on $\Lambda\times G$ is also a Polish group.

Let ${\rm GL}(2,\R)$ be the group of all invertible $2\times 2$ real matrices. For any countable subgroup $\Lambda$ of ${\rm GL}(2,\R)$, we define a continuous homomorphism $\Upsilon_\Lambda:\Lambda\rightarrow {\rm Aut}(\R^2)$  with $\Upsilon_\Lambda(A)(t)=A t$ for $t\in\R^2$ and $A\in \Lambda$. Endow $\Lambda$ with discrete topology. We shall denote $\Lambda\ltimes_{\Upsilon_\Lambda} \R^n$ by $\Lambda\ltimes\R^n$ for brevity.
 The group of all rotations of the euclidean plane $\R^2$ is given by
$${\rm SO(2)}=\left\{R(t):R(t)={\normalsize\begin{pmatrix}
     \cos(2\pi t)&-\sin(2\pi t) \\
     \sin(2\pi t)&\cos(2\pi t)
   \end{pmatrix}},t\in\R\right\}.$$
Denote ${\Gamma}^z=\{R(\sqrt{2}n):n\in\Z\}$ and
$\Gamma_k=\{R(i\frac{2\pi}{k}):i\leq k\}~(k\geq 2).$

The following result provide a negative answer to Question 7.5 of~\cite{DZ}.
\begin{theorem}
Let $G=\Gamma_k\ltimes\R^2$ and $H={\Gamma}^z\ltimes\R^2$. Then $E(G)<_B E(H)$, but there is no topological isomorphism $\phi:G_0\to H_0$ such that $\phi{\rm Inn}_G(G_0)\phi^{-1}\subseteq{\rm Inn}_H(H_0)$.
\end{theorem}

This article is organized as follows. In section 2, we prove some Borel reducibility results on $E(G)$.  In section 3, we prove Theorems 1.1, 1.2.

\section{Preliminaries}

Let $G$ be a Polish group. the authors~\cite{DZ} defined an equivalence relation $E_*(G)$ on $G^\omega$ as: for $x,y \in G^\omega$,
$$x E_*(G)y\iff\lim_nx(0)x(1)\dots x(n)y(n)^{-1}\dots y(1)^{-1}y(0)^{-1}\mbox{ converges}.$$

Suppose $G$ is TSI. It is more convenient to take $E_*(G)$
as research object than $E(G)$. It is trivial that $E(G)\sim_B E_*(G)$ (cf.~\cite[Proposition 2.2]{DZlco}). For brevity, we define
$$(x,y)|_m^n=x(m)\cdots x(n)y(n)^{-1}\cdots y(m)^{-1}$$
for $x,y\in G^\omega$ and $m\leq n$. Let $d$ be a complete compatible two-side invariant metric on $G$.
It is clear that $xE_*(G)y\iff \lim_m\sup_n d(1_G,(x,y)|_m^n)=0$.

Special equivalence relations $E(G;0)$ can be found in~\cite[6.1]{DZ}, which is defined on $G^\omega$ as
$xE(G;0)y\iff \lim_nx(n)^{-1}y(n)=1_G.$

Let $G,H$ be Polish groups, and let $d_G,d_H$ be left invariant compatible metrics on $G$ and $H$ respectively. Then we denote by ${\rm Hom}(G,H)$ the space of all continuous homomorphisms of $G$ to $H$.
\begin{definition}
 For $\Omega,\Omega'\subseteq{\rm Hom}(G,H)$ and identity neighborhood $U$ of $G$, we say that $\Omega$ is {\it locally approximated by $\Omega'$ on $U$ }if the following condition is satisfied:
$$\forall \varphi\in \Omega\,\exists(\varphi_n)\in (\Omega')^\omega\,(\lim_n\sup\{d_H(\varphi(g),\varphi_n(g)):g\in U\}=0).$$
\end{definition}
It is easy to check that the above definition does not depend on the choice of left invariant compatible metrics $d_G$ and $d_H$. We say that $\Omega$ is {\it equicontinuous}, if for any open identity neighborhood $W$ of $H$, there is an open identity neighborhood $V$ of $G$ such that $\varphi(V)\subseteq W$ for all $\varphi\in \Omega$.

\begin{proposition}\label{pointwis}
Let $G$ and $H$ be two Polish groups such that $G$ is locally compact and let $\Omega,\Omega'\subseteq {\rm Hom}(G,H)$. Suppose $\Omega'$ is equicontinuous and $\Omega\subseteq\overline{\Omega'}$ under pointwise convergence topology, then
 $\Omega$ is locally approximated by $\Omega'$ on an open identity neighborhood $U$.
\end{proposition}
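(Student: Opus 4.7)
The plan is to reduce to the classical Ascoli-type phenomenon: on a compact set, an equicontinuous family for which pointwise convergence holds on a dense subset automatically enjoys uniform convergence. Since $G$ is locally compact, I would first pick an open identity neighborhood $U$ with compact closure $K=\overline{U}$, and fix a countable dense subset $D\subseteq K$; this single choice of $U$ will serve for every $\varphi\in\Omega$, as the definition of local approximation requires.

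Given $\varphi\in\Omega$, the restriction map from $H^G$ to $H^D$ is continuous when both spaces carry the product topology, and $H^D$ is metrizable because $D$ is countable. The hypothesis $\varphi\in\overline{\Omega'}$ in the pointwise topology on $G$ therefore implies that $\varphi|_D$ lies in the closure of $\{\psi|_D:\psi\in\Omega'\}$ in the metrizable space $H^D$, so I can extract a sequence $(\varphi_n)\in(\Omega')^\omega$ with $\varphi_n(g)\to\varphi(g)$ for every $g\in D$. I would also record the simple observation that $\varphi$ inherits the equicontinuity of $\Omega'$: if an open $V\subseteq G$ witnesses $d_H(1_H,\psi(v))<\varepsilon$ for every $\psi\in\Omega'$ and $v\in V$, then passing to pointwise limits yields $d_H(1_H,\varphi(v))\le\varepsilon$ for every $v\in V$.

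To finish, fix $\varepsilon>0$ and select such a $V$. By compactness of $K$ and density of $D$, cover $K$ by finitely many left translates $g_1V,\dots,g_mV$ with $g_1,\dots,g_m\in D$. For $n$ large enough that $d_H(\varphi_n(g_i),\varphi(g_i))<\varepsilon$ for all $i\le m$, a three-$\varepsilon$ argument exploiting left invariance of $d_H$ (applied to $g_i^{-1}g\in V$ whenever $g\in g_iV$) yields $\sup_{g\in U}d_H(\varphi_n(g),\varphi(g))\le 3\varepsilon$, which is exactly the conclusion of the proposition.

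The step I expect to be most delicate is the extraction of an actual \emph{sequence} from the abstract closure condition, since pointwise convergence on all of $G$ is not in general metrizable; the passage through a countable dense $D\subseteq K$ and the resulting metrizability of $H^D$ are what makes this go through. Once the sequence is in hand and $\varphi$'s own equicontinuity is confirmed, the Ascoli step is a routine finite-cover argument and does not use any further structure of $G$ or $H$.
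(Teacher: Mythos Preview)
Your argument is correct and follows the same Ascoli-type strategy as the paper: choose a precompact identity neighborhood, use equicontinuity to reduce the uniform estimate to finitely many points, and finish with a three-term triangle inequality. The one genuine addition you make is the careful extraction of a \emph{sequence} via a countable dense $D\subseteq K$ and the metrizability of $H^D$; the paper simply asserts the existence of $(\varphi_n)$ converging pointwise on all of $G$, so your treatment of this step is in fact more complete than the original.
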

\begin{proof}
Let $d_G,d_H$ be complete compatible two-side invariant metrics on $G$ and $H$ respectively.
Since $G$ is locally compact, we can find an open identity neighborhood $U$ so that $\overline{U}$ is compact. Fix a $\varphi\in\Omega$, then there is $(\varphi_n)\in (\Omega')^\omega$ with $\lim_n\varphi_n(g)=\varphi(g)$ for all $g\in G$. Let $\varepsilon>0$. Note that $\varphi$ is continuous. Again since $\Omega'$ is equicontinuous, we can find a $\delta>0$ with $$\sup\{d_H(\varphi_n(g),1_H),d_H(\varphi(g),1_H): d_G(g, 1_G)<\delta,n\in\omega\}<3^{-1}\varepsilon.$$

  By compactness, there are finitely many points $g_0,g_1,\cdots,g_m\subseteq U$ such that, for all $g\in U$, there is $i\leq m$ with $d_G(g_ig^{-1},1_G)<\delta$. Then there is a $N\in\omega$ with $d_{H}(\varphi_n(g_i),\varphi(g_i))<3^{-1}\varepsilon$ for all $i\leq m$ and $n\geq N$. Next fix a $g\in U$, we find a $g_{i}$ such that $d_G(g_{i}{g}^{-1},1_G)<\delta$. Then for each $n\geq N$, we have
$$\begin{aligned}
d_H(\varphi_n(g),\varphi(g))&\leq d_H(\varphi_n(g_{i}),\varphi_n(g))+d_H(\varphi_n(g_{i}),\varphi(g_{i}))+d_H(\varphi(g_{i}),\varphi(g))\cr
    &\leq \varepsilon.
  \end{aligned}$$
  This show that $\lim_n\sup\{d_H(\varphi(g),\varphi_n(g)):g\in U\}=0$. Therefore, $\Omega$ is locally approximated by $\Omega'$ on $U$.
\end{proof}

Now we generalize Theorem 6.7 of~\cite{DZ}.

\begin{lemma}\label{the borl red}
Let $G$ and $H$ be two TSI Polish groups, $G_c$ and $H_c$ two closed normal subgroups of $G$ and $H$ respectively. Suppose $G_c$ is also open in $G$. If there exist a continuous homomorphism $\phi:G_c\to H_c$ and an open subset $U\ni1_G$ of $G_c$ satisfying the following conditions:
\begin{enumerate}
  \item [(i)] $\ker(\phi)$ is non-archimedean;
  \item [(ii)] $\phi{\rm Inn}_G(G_c)$ is locally approximated by ${\rm Inn}_H(H_c)\phi$ on $U$;
  \item [(iii)]

  let $\widetilde{\phi}:G_c/\ker(\phi)\rightarrow H_c$ with $\widetilde{\phi}(g\ker(\phi))=\phi(g)$ for $g\in G_c$ and $U\ker(\phi)=\{g\ker(\phi):g\in U\}$, then $\widetilde{\phi}|_{U\ker(\phi)}$ is a homeomorphism from $U\ker(\phi)$ onto $\phi(U)$;
  \item [(\romannumeral 4)]for any $g\in G$, $\iota_g(\ker(\phi))=\ker(\phi)$,
\end{enumerate}
then $E(G)\le_BE(H)\times E(G;0)$.
\end{lemma}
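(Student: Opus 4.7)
Since $G$ and $H$ are TSI, $E(G)\sim_B E_*(G)$ and $E(H)\sim_B E_*(H)$, so it suffices to build a Borel reduction $\theta\colon G^\omega\to H^\omega\times G^\omega$ from $E_*(G)$ to $E_*(H)\times E(G;0)$. The plan generalises the argument of~\cite[Theorem~6.7]{DZ}; the novelty is that $\phi$ is defined only on the open normal subgroup $G_c$, so conjugations appearing inside the cumulative products of $E_*(G)$ involve elements outside the domain of $\phi$ and must be re-expressed on the $H$-side via condition~(ii).

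For the construction, condition~(iii) makes $\widetilde\phi\colon U\ker(\phi)\to\phi(U)$ a homeomorphism, so I may pick a Borel section $s\colon\phi(U)\to U$ with $\widetilde\phi\circ s=\mathrm{id}$; then $\kappa(g):=s(\phi(g))^{-1}g\in\ker(\phi)$ defines a Borel kernel-residual on $U$. Using openness of $G_c$ in $G$, I fix a Borel transversal of $G/G_c$ so that, modulo a single leading element shared by $x$ and $y$, we may pass to sequences $x',y'\in G_c^\omega$. A further Borel pre-shift absorbs the finitely many initial indices where cumulative products leave $U$ into a fixed lumped factor, which is permissible since $(x,y)|_m^n\to 1_G$ forces cumulative products eventually into $U$. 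I then define $\theta(x)=(\phi\circ x',\,\kappa\circ\hat x')$, with $\hat x'(n)=x'(0)\cdots x'(n)$, so the first coordinate carries the $\phi$-image of $x'$ and the second records the $\kappa$-residuals of its cumulative products. Condition~(iv) ensures these residuals remain inside $\ker(\phi)$ under every conjugation that arises when expanding $(x,y)|_m^n$, which is precisely what decouples the two coordinates.

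The forward direction is where the work lies. Assuming $xE_*(G)y$, expanding $\phi((x',y')|_m^n)$ factor-by-factor in $H_c$ introduces conjugations $\phi\iota_{u_m}$ with $u_m=x'(0)\cdots x'(m-1)\in G$, and $u_m$ need not lie in $G_c$. Condition~(ii) supplies, for each $u_m$, a sequence $(h_{m,k})_k$ in $H$ with $\phi\iota_{u_m}\approx \iota_{h_{m,k}}\phi$ uniformly on $U$; because $(x,y)|_m^n\to 1_G$ the relevant arguments eventually lie in $U$, so the approximation applies. The principal obstacle I anticipate is controlling the telescoping of these approximation errors uniformly in $n$; I would use two-sided invariance of a complete metric on $H$ to bound the total error by $\sup_n d_H(1_H,\phi((x,y)|_m^n))$ plus a uniform term from~(ii), both vanishing as $m\to\infty$. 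The $E(G;0)$-coordinate follows more cheaply, since non-archimedeanity of $\ker(\phi)$ turns convergence to $1_G$ into eventual entry into any preassigned open subgroup.

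The reverse direction is more routine. Assuming $\theta(x)\,E_*(H)\times E(G;0)\,\theta(y)$, condition~(iii) lifts the $E_*(H)$-convergence of $\phi\circ x'$ back to $E_*(G_c/\ker(\phi))$-convergence; combined with the $E(G;0)$-convergence of the $\kappa$-residuals — which, by non-archimedeanity of $\ker(\phi)$, amounts to eventual containment in arbitrarily small open subgroups — this assembles into $E_*(G_c)$-convergence of $x'$ and $y'$, hence $xE_*(G)y$. Finally $\theta$ is Borel because the section $s$, the residual $\kappa$, and the transversal of $G/G_c$ are all Borel by construction.
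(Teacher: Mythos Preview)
Your construction of the reduction is not well-defined. You set $\theta(x)=(\phi\circ x',\,\kappa\circ\hat x')$ with $\hat x'(n)=x'(0)\cdots x'(n)$ and $\kappa(g)=s(\phi(g))^{-1}g$ defined only for $g\in U$. But the cumulative products $\hat x'(n)$ of a \emph{single} sequence $x'\in G_c^\omega$ have no reason to lie in $U$, or ever to enter $U$: the condition $(x,y)|_m^n\to 1_G$ that you invoke is a property of a \emph{pair} in the same $E_*(G)$-class, and cannot be used to define $\theta(x)$, which must depend on $x$ alone. The ``Borel pre-shift'' you describe therefore does not exist in general. There is a related incoherence in your passage to $G_c^\omega$: you claim $x'\in G_c^\omega$, yet immediately afterwards say the conjugating elements $u_m=x'(0)\cdots x'(m-1)$ ``need not lie in $G_c$''. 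If $x'\in G_c^\omega$ then every such product \emph{is} in $G_c$, and condition~(ii) would never be needed; the whole point of the lemma is that the transversal parts of $x(n)$ cannot be absorbed into $G_c$.

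The paper's construction avoids both problems by keeping the decomposition $x(n)=w_i g$ (with $w_i$ in a fixed transversal of $G/G_c$ and $g\in G_c$) at every index, and defining the reduction coordinatewise as $\psi(x)(n)=((h^{w_i}_n\phi(g),\,i),\,x(n))$. Here the $E(G;0)$-component is simply the identity map (together with an $E_*(\mathbb{Z})$-component recording the coset index), which forces the transversal parts of $x$ and $y$ to eventually agree; and the elements $h^{w_i}_n\in H$ are chosen in advance from condition~(ii) so that the approximation error at stage $n$ is below $2^{-n}$, making the telescoping sum of errors converge. Your proposed bound ``$\sup_n d_H(1_H,\phi((x,y)|_m^n))$ plus a uniform term from~(ii)'' does not account for this accumulation: without the pre-arranged $2^{-n}$ decay, the errors at levels $m,m+1,\dots,n$ add up with no control. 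The reverse direction in the paper also differs from your sketch: it does not lift directly via~(iii), but first runs a contradiction argument locating a last index $p_k$ where the tail product stays small, applies the forward estimate there, and only then invokes the homeomorphism~(iii) and the non-archimedean structure of $\ker(\phi)$ to upgrade $E_*(G_c/\ker(\phi))$ to $E_*(G_c)$.
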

\begin{proof}
Let $d_G,d_H$ be complete compatible two-side invariant metrics on $G$ and $H$ respectively.
By Corollary 3.6 of~\cite{DZ}, we only need to show that $E_*(G)\le_BE_*(H\times\mathbb{Z})\times E(G;0)$.

We fix a $T_G\subseteq G$ such that $T_G$ meets each coset of $G_c$ at exactly one point and enumerate $T_G$ as $\{w_n:n\in\omega\}$.
Put a $\delta>0$ satisfying $\{g\in G: d_G(g,1_G)<\delta\}\subseteq U$.

For each $w\in T_G,n\in\omega$, by condition (ii), we can find a $h^w_n\in H$ such that
 $$\sup\{d_H(\phi(wgw^{-1}),h^w_n\phi(g)(h^w_n)^{-1}):g\in U\}<2^{-n}.\eqno{(*)}$$
Next we define three Borel maps $\psi: G^\omega\rightarrow (H\times\mathbb{Z})^\omega\times G^\omega,\psi_1: G^\omega\rightarrow H^\omega$ and $\psi_2: G^\omega\rightarrow \mathbb{Z}^\omega$ by letting, for each $x\in G^\omega$ and $n\in\omega$,
$$\psi(x)(n)=((\psi_1(x)(n),\psi_2(x)(n)),x(n))=((h^{w_i}_n\phi(g),i),x(n)),$$
where $x(n)=w_ig$ for some $g\in G_c$. Such $w_i$ and $g$ are unique, since $|T_G\cap g^*G_c|=1$ holds for each $g^*\in G$. It remains to verify that $\psi$ is a reduction from $E_*(G)$ to $E_*(H\times\mathbb{Z})\times E(G;0)$.

Let $x,y\in G^\omega$. Since $T_G$ meets each coset of $G_c$ at exactly one point, every $x(n)$ and $y(n)$ is of the form $x(n)=u_ng_n,~y(n)=v_nh_n$ for $u_n,v_n\in T_G$ and $g_n,h_n\in G_c$.

(1) First suppose $x E_*(G) y$. Then $\lim_n x(n)y(n)^{-1}=\lim_n u_ng_nh_n^{-1}v_n^{-1}=1_G$, i.e., $x E(G;0)y$. Since $G_c$ is open, by the property of $T_G$, there is $N_1\in\omega$ such that $u_n=v_n$ for all $n>N_1$. This implies that $\psi_2(x)(n)=\psi_2(y)(n)$ for all $n>N_1$, i.e., $\psi_2(x)E_*(\mathbb{Z})\psi_2(y)$. So we only need to verify that $\psi_1(x)E_*(H)\psi_1(y)$.

We fix a $0<\varepsilon<2^{-1}\delta$, then there is a natural number $N>N_1$ so that for any $n\geq m>N$,
$$\begin{aligned}
d_G((x,y)|_m^n,1_G)=&d_G(x(m)\cdots x(n)y(n)^{-1}\cdots y(m)^{-1},1_G)\\
=&d_G(u_mg_m\cdots u_ng_nh_n^{-1}u_n^{-1}\cdots h_m^{-1}u_m^{-1},1_G)<\varepsilon.
  \end{aligned}$$
In particular, we have $d_G((x,y)|_{m}^n,1_G)=d_G(g_m(x,y)|_{m+1}^nh_m^{-1},1_G)<\varepsilon<\delta$. Put $N<k\leq l$, by the triangle inequality and condition $(*)$, we have
$$\begin{aligned}
&d_H(\phi((x,y)|_{k}^{l}),(\psi_1(x),\psi_1(y))|_{k}^l)\\
=&d_H(\phi(u_kg_k(x,y)|_{k+1}^{l}h_k^{-1}u_k^{-1}),h^{u_k}_kz_k(h^{u_k}_k)^{-1})\\
\leq & d_H(\phi(u_kg_k(x,y)|_{k+1}^{l}h_k^{-1}u_k^{-1}),h^{u_k}_k\phi(g_k(x,y)|_{k+1}^{l}h_k^{-1})(h^{u_k}_k)^{-1})+\\
&d_H(h^{u_k}_kz_k(h^{u_k}_k)^{-1},h^{u_k}_k\phi(g_k(x,y)|_{k+1}^{l}h_k^{-1})(h^{u_k}_k)^{-1})\\
\leq& 2^{-k}+d_H(z_k,\phi(g_k(x,y)|_{k+1}^{l}h_k^{-1}))\\
=& 2^{-k}+d_H((\psi_1(x),\psi_1(y))|_{k+1}^l,\phi((x,y)|_{k+1}^{l})),
\end{aligned}$$
where $z_k=\phi(g_k)(\psi_1(x),\psi_1(y))|_{k+1}^l\phi(h_k^{-1})$. It follows from $d_G(g_l h_l^{-1},1_G)<\varepsilon$ that
$$\begin{aligned}
d_H(\phi((x,y)|_{l}^{l}),(\psi_1(x),\psi_1(y))|_{l}^l)&=d_H(\phi(u_lg_lh_l^{-1}u_l^{-1}),h^{u_l}_l\phi(g_lh_l^{-1})(h^{u_l}_l)^{-1})\\
&<2^{-l}.
\end{aligned}$$
Thus $d_H(\phi((x,y)|_{k}^{l}),(\psi_1(x),\psi_1(y))|_{k}^l)<2^{-k}+\cdots 2^{-l}<2^{-k+1}$. Note that $\phi$ is continuous and $\lim_n\sup_{n\leq k\leq l}d_G((x,y)|_{k}^{l},1_G)=0$, so
$$\lim_n\sup_{n\leq k\leq l}d_H((\psi_1(x),\psi_1(y))|_{k}^{l},1_H)=0.$$
This shows that $\psi_1(x)E_*(H)\psi_1(y)$, and thus $(\psi(x),\psi(y))\in E_*(H\times\mathbb{Z})\times E(G;0)$.

(2) On the other hand, suppose $(\psi(x),\psi(y))\in E_*(H\times\mathbb{Z})\times E(G;0)$. Then there is a natural number $N_2\in\omega$ so that $\lim_n\psi_2(x)(n)=\psi_2(y)(n)$ for all $n\geq N_2$. This implies that $u_n=v_n$ for all $n\geq N_2$. Now we fix an arbitrary real number $0<\delta'<2^{-1}\delta$. Then there is a natural number $N'\geq N_2$ such that $d_G(x(n)y(n)^{-1},1_G)<2^{-1}\delta'$ for all $n\geq N'$.

Let $d_{\phi}(g\ker(\phi),g'\ker(\phi))=\inf\{d_G(gh,g'h'):h,h'\in\ker(\phi)\}$, then $d_{\phi}$ is a complete compatible two-side invariant metric on $G_c/\ker(\phi)$ (cf.~\cite[Lemma 2.2.8]{gaobook}).
Next we claim that $\lim_m\sup_{m\leq n}d_\phi((x,y)|_m^n\ker(\phi),\ker(\phi))<\delta'$. Otherwise, there are two increasing sequences of natural numbers $(m_k)$ and $(n_k)$ with $N'<m_k<n_k<m_{k+1}$ and $$d_G((x,y)|_{m_k}^{n_k},1_G)\geq d_\phi((x,y)|_{m_k}^{n_k}\ker(\phi),\ker(\phi))\geq\delta'.$$

For each $k\in\omega$, let $p_k$ be the largest natural number $p<n_k$ such that $$ \forall p'\leq p\,(d_G((x,y)|_{m_k}^{p'},1_G)<2^{-1}\delta').$$
Then we have
$$\begin{aligned}
2^{-1}\delta'\leq d_G((x,y)_{m_k}^{p_k+1},1_G)&\leq d_G(x(p_k+1),y(p_k+1))+d_G((x,y)|_{m_k}^{p_k},1_G)\\
&<2^{-1}\delta'+2^{-1}\delta'=\delta',\end{aligned}$$ and for $m_k\leq q\leq p_{k}+1$, we have
$$\begin{aligned}
d_G((x,y)|_q^{p_k+1},1_G)=&d_G((x,y)|_{m_k}^{p_k+1},(x,y)|_{m_k}^{q-1})\\
\leq&d_G((x,y)|_{m_k}^{p_k+1},1_G)+d_G((x,y)|_{m_k}^{q-1},1_G)\\
<&\delta'+2^{-1}\delta'<\delta.
\end{aligned}$$
Similar to the proof of (1), we get $$d_H(\phi((x,y)|_{m_k}^{p_k+1}),(\psi_1(x),\psi_1(y))|_{m_k}^{p_k+1})\leq 2^{-m_k+1}.$$
Note that $(\psi_1(x),\psi_1(y))\in E_*(H)$, so $\lim_k(\psi_1(x),\psi_1(y))|_{m_k}^{p_k+1}=1_H$ and hence $\lim_k\phi((x,y)|_{m_k}^{p_k+1})=1_H$. Again since $\widetilde{\phi}|_{U\ker(\phi)}:U\ker(\phi)\rightarrow\phi(U)$ is a homeomorphism and $(x,y)|_{m_k}^{p_k+1}\in U$, we get that
$\lim_k(x,y)|_{m_k}^{p_k+1}\ker(\phi)=\ker(\phi)$.
However, this contradicts $d_\phi((x,y)|_{m_k}^{n_k}\ker(\phi),\ker(\phi))\geq\delta'$. So $\lim_m\sup_{m\leq n}d_\phi((x,y)|_m^n\ker(\phi),\ker(\phi))=0$.

We define a continuous surjective homomorphism $\varphi:G_c^\omega\rightarrow (G_c/\ker(\phi))^\omega$ by letting $\varphi(z)(n)=z(n)\ker(\phi)$ for $z\in G_c^\omega$ and $n\in\omega$. Let $\hat{x},\hat{y}\in G^\omega_c$ with
$$\hat{x}(n)=u_{0}u_{1}\cdots u_{n}g_{n}u_{n}^{-1}\cdots u_{1}^{-1}u_{0}^{-1},$$
$$\hat{y}(n)=u_{0}u_{1}\cdots u_{n}h_{n}u_{n}^{-1}\cdots u_{1}^{-1}u_{0}^{-1}.$$
Then for any $N'<m\leq n$, we have
$$(\hat{x},\hat{y})|_m^n=u_{0}u_{1}\cdots u_{m-1}(x,y)|_{m}^{n}u_{m-1}^{-1}\cdots u_{1}^{-1}u_{0}^{-1},$$
$$(\varphi(\hat{x}),\varphi(\hat{y}))|_m^n=(\hat{x},\hat{y})|_m^n\ker{(\phi)}.$$
Note that $d_G$ is two-side invariant. Then it follows from conditions (\romannumeral 4) and the above equality  that
$$\begin{aligned}
 d_\phi((\hat{x},\hat{y})|_m^n\ker(\phi),\ker(\phi))&=\inf\{d_G((\hat{x},\hat{y})|_m^n,h):h\in\ker(\phi)\}\\
 &=\inf\{d_G(({x},{y})|_{m}^{n},h):h\in\ker(\phi)\}\\
 &=d_\phi((x,y)|_{m}^{n}\ker(\phi),\ker(\phi)).
  \end{aligned}
$$
This shows that $\lim_m\sup_{m\leq n}d_\phi((\varphi(\hat{x}),\varphi(\hat{y}))|_m^n,\ker(\phi))=0$. Thus we have $\varphi(\hat{x})E_*(G_c/\ker{(\phi)})\varphi(\hat{x})$. It is clear that $\lim_n\hat{x}(n)\hat{y}(n)^{-1}=1_{G_c}$.

Next we show that $\hat{x}E_*(G_c)\hat{y}$.
Since $\ker(\phi)$ is non-archimedean, there exists a sequence of open subgroups $(M_l)$ of $\ker(\phi)$ which forms a neighborhood base of the identity element in $\ker(\phi)$. For any $\varepsilon>0$, there exist $l\in\omega$ and $0<\varepsilon'<\varepsilon$ such that
$$\{h\in\ker(\phi):d_G(1_G,h)<3\varepsilon'\}\subseteq M_l\subseteq\{h\in\ker(\phi):d_G(1_G,h)<\varepsilon\}.$$
Denote $\lim_n(\varphi(\hat{x}),\varphi(\hat{y}))|_0^n=g_0\ker(\phi)$. Then there is $N\in\omega$ such that
$$d_G(\hat{x}(n),\hat{y}(n))<\varepsilon',$$
$$d_\phi(\hat{x}(0)\cdots \hat{x}(n)\hat{y}(n)^{-1}\cdots \hat{y}(0)^{-1}\ker(\phi),g_0\ker(\phi))<\varepsilon'$$
for $n>N$. By the definition of $d_\phi$, there exist $e_n\in\ker(\phi)$ for each $n>N$ such that
$$d_G(\hat{x}(0)\cdots \hat{x}(n)\hat{y}(n)^{-1}\cdots \hat{y}(0)^{-1},g_0 e_n)<\varepsilon'.$$
Note that
$$\begin{array}{ll}& d_G(\hat{x}(0)\cdots \hat{x}(n)\hat{x}(n+1)\hat{y}(n+1)^{-1}\hat{y}(n)^{-1}\cdots \hat{y}(0)^{-1},\cr
&\quad\quad \hat{x}(0)\cdots \hat{x}(n)\hat{y}(n)^{-1}\cdots\hat{y}(0)^{-1})\cr
=& d_G(\hat{x}(n+1),\hat{y}(n+1))<\varepsilon'.\end{array}$$
Then $d_G(1_G,e_n^{-1}e_{n+1})=d_G(g_0 e_n,g_0 e_{n+1})<3\varepsilon'$. So $e_n^{-1}e_{n+1}\in M_l$. It follows that $e_n^{-1}e_m$ is in the open subgroup $M_l$ for $m\ge n>N$. Thus $d_G(e_n,e_m)<\varepsilon$, and hence
$$\begin{array}{ll}& d_G(\hat{x}(0)\cdots \hat{x}(n)\hat{y}(n)^{-1}\cdots \hat{y}(0)^{-1},\hat{x}(0)\cdots \hat{x}(m)\hat{y}(m)^{-1}\cdots\hat{y}(0)^{-1})\cr
<& 2\varepsilon'+d_G(g_0e_n,g_0e_m)<3\varepsilon.\end{array}$$
This gives $\hat{x}E_*(G_c)\hat{y}$.

Note that $(\hat{x},\hat{y})|_m^n=u_{0}u_{1}\cdots u_{m-1}(x,y)|_{m}^{n}u_{m-1}^{-1}\cdots u_{1}^{-1}u_{0}^{-1},$
 so $$\lim_m\sup_{m\leq n}d_G((x,y)|_{m}^{n},1_G)=\lim_m\sup_{m\leq n}d_G((\hat{x},\hat{y})|_m^n,1_G)=0.$$
Therefore ${x}E_*(G){y}$.
\end{proof}

\begin{theorem}\label{cor locall}
Let $G$ and $H$ be two TSI Polish groups such that $G$ is locally compact. Suppose $G_c$ is an open normal subgroup of $G$ and $H_c$ is a closed normal subgroup of $H$.
If there exists a continuous homomorphism $\phi:G_c\to H_c$ satisfying the following conditions:
\begin{enumerate}
  \item [(i)] $\ker(\phi)$ is non-archimedean.
  \item [(ii)] $\phi{\rm Inn}_G(G_c)\subseteq\overline{{\rm Inn}_H(H_c)\phi}$ under pointwise convergence topology.
\end{enumerate}
then $E(G)\le_BE(H)\times E(G;0)$. Moreover, if the interval $[0,1]$ embeds into $H$, then $E(G)\leq_B E(H)$.
\end{theorem}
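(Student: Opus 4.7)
The plan is to apply Lemma \ref{the borl red} after verifying its four hypotheses, and then to absorb the $E(G;0)$ factor using the embedding $[0,1]\hookrightarrow H$ for the ``moreover'' clause. Condition (i) of the lemma is given. For condition (ii), I would apply Proposition \ref{pointwis} with $\Omega=\phi\,{\rm Inn}_G(G_c)$ and $\Omega'={\rm Inn}_H(H_c)\phi$: hypothesis (ii) of the theorem gives $\Omega\subseteq\overline{\Omega'}$ in the pointwise topology, and equicontinuity of $\Omega'$ follows from any complete compatible two-side invariant metric on $H$ --- its restriction to $H_c$ is still two-side invariant, each conjugation $\iota_h$ is then an isometry fixing $1_H$, so every $\iota_h\circ\phi$ inherits the modulus of continuity of $\phi$ at $1_G$. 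Proposition \ref{pointwis} then furnishes an open identity neighborhood $U\subseteq G_c$ on which the local approximation holds. Since $G$ is locally compact and $G_c$ is open in $G$, $G_c$ is locally compact, so I may shrink $U$ to ensure $\overline U$ is compact while preserving the local-approximation property.

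For condition (iii), the quotient map $\pi:G_c\to G_c/\ker(\phi)$ is open, so $\pi(\overline U)$ is compact and $\pi(U)=U\ker(\phi)/\ker(\phi)$ is open in it; the induced map $\widetilde\phi$ is continuous and injective, so $\widetilde\phi|_{\pi(\overline U)}$ is a continuous bijection from a compact space onto the Hausdorff space $\phi(\overline U)$, hence a homeomorphism, whose restriction to the open set $\pi(U)$ is a homeomorphism onto $\phi(U)$. For condition (iv), given $g\in G$ and $k\in\ker(\phi)$, hypothesis (ii) supplies $(h_n)\in H^\omega$ with $\phi(gkg^{-1})=\lim_n h_n\phi(k)h_n^{-1}=\lim_n 1_H=1_H$; hence $\iota_g(k)\in\ker(\phi)$, and applying the same to $g^{-1}$ in place of $g$ forces $\iota_g(\ker(\phi))=\ker(\phi)$. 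Lemma \ref{the borl red} now yields $E(G)\leq_B E(H)\times E(G;0)$.

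For the ``moreover'' clause, the remaining step is to show $E(H)\times E(G;0)\leq_B E(H)$ once $[0,1]$ embeds in $H$, and I expect this to be the main obstacle. The strategy is to start from a topological embedding $\iota:[0,1]\to H$ (translated so that $\iota(0)=1_H$) together with a complete compatible two-side invariant metric $\rho$ on $G_c$ truncated to take values in $[0,1]$, and to interleave the reduction produced by Lemma \ref{the borl red} with auxiliary coordinates built from $\iota$ and $\rho$ that encode the ``$\lim_n x(n)^{-1}y(n)=1_G$'' condition as part of a single $E(H)$-convergence condition. The two-side invariance of the metric on $H$ is what makes the ``if'' direction of the combined reduction work, while the non-archimedean structure of $\ker(\phi)$ already exploited in the proof of Lemma \ref{the borl red} is what prevents the ``only if'' direction from collapsing $E(G;0)$-inequivalent inputs; carefully arranging the interleaving so that both directions go through simultaneously is the delicate part.
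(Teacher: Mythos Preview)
Your verification of the hypotheses of Lemma~\ref{the borl red} is correct and follows the same route as the paper: establish equicontinuity of ${\rm Inn}_H(H_c)\phi$ via a two-sided invariant metric, invoke Proposition~\ref{pointwis} to upgrade pointwise convergence to local approximation on some $U$, use local compactness to get the homeomorphism in (iii), and check (iv). You in fact streamline the paper slightly: the paper first shrinks $U$ to be conjugation-invariant, passes to the open normal subgroup $G_c'=\bigcup_i U^i$, and restricts $\phi$ to $\phi'=\phi|_{G_c'}$ before verifying (iv) for $\ker(\phi')$; your direct use of the pointwise hypothesis (ii) to conclude $\phi(gkg^{-1})=\lim_n h_n\phi(k)h_n^{-1}=1_H$ for $k\in\ker(\phi)$ shows that this detour is unnecessary and one may keep the original $G_c$ and $\phi$ throughout.

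The gap is in the ``moreover'' clause. What you describe there is a heuristic, not a proof: you correctly identify that one must absorb the $E(G;0)$ factor into $E(H)$ using the arc in $H$, but the interleaving sketch does not establish either direction of the required Borel bireduction, and you yourself flag it as the main obstacle. In the paper this step is not argued at all; it is simply a citation of \cite[Lemma~2.9]{DZtsi}, which supplies exactly the reduction $E(H)\times E(G;0)\leq_B E(H)$ under the hypothesis that $[0,1]$ embeds in $H$. So the fix is to replace your sketch by that reference.
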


\begin{proof}
Let $d_G,d_H$ be complete two-side invariant compatible metrics on $G$ and $H$ respectively.
It is clear that ${\rm Inn}_H(H_c)\phi$ is equicontinuous.
By Proposition~\ref{pointwis},
$\phi{\rm Inn}_G(G_c)$ is locally approximated by ${\rm Inn}_H(H_c)\phi$ on some open identity neighborhood $U$ of $G_c$. By shrinking we may assume $vUv^{-1}=U$ for all $v\in G$ and $U=U^{-1}$. For any $v\in G$, we have
$$\exists(h_n)\in H^\omega\,\lim_n\sup\{d_H(\phi(vg'v^{-1}),h_n\phi(g')h_n^{-1}):g'\in U\}=0.$$
 Let $G_c'=\bigcup_i U^i=\bigcup_i (vUv^{-1})^i\subseteq G_c$ and $\phi'=\phi|_{G'_c}$. Then $G_c'$ is an open normal subgroup of $G$. Note that $\phi'$ is a continuous homomorphism on $G'_c$, so $\phi'(vgv^{-1})=\lim_nh_n\phi'(g)h_n^{-1}$ for all $g\in G'_c$.
This implies that $$\phi'(g)=1_H\iff\lim_nd_H(h_n\phi'(g)h_n^{-1},1_H)=0\iff \phi(vgv^{-1})=1_H.$$
Thus $\iota_v(\ker(\phi'))=\ker(\phi')$ for any $v\in G$.
Let $\pi:G'_c\rightarrow G'_c/\ker{(\phi')}$ be the canonical projection map $\pi(g)=g\ker{(\phi')}$. Then $\pi$ is continuous and open. Define a map ${\widetilde{\phi}}:G'_c/\ker{(\phi')}\rightarrow H_c$ with ${\widetilde{\phi}}(g\ker{(\phi')})=\phi'(g)$. Then ${\widetilde{\phi}}$ is a continuous homomorphism. Since $G$ is locally compact, then there is an open subset $U\supseteq U_1\ni 1_G$ of $G'_c$ such that $\widetilde{\phi}|_{U_1\ker(\phi')}$ is a homeomorphism of $U_1\ker(\phi)$ to $\phi'(U_1)$. Note that $\phi'=\phi|_{G'_c}$, so $\phi'{\rm Inn}_G(G'_c)$ is locally approximated by ${\rm Inn}_H(H_c)\phi'$ on $U_1$.

Put $G_c=G'_c,H_c=H_c$ and $\phi=\phi'$. Then it follows from Lemma~\ref{the borl red} that $E(G)\le_BE(H)\times E(G;0)$.

Suppose the interval $[0,1]$ embeds into $H$. By~\cite[Lemma 2.9]{DZtsi}, we get that $E(G)\leq_B E(H)$.
\end{proof}
\section{Rigid theorem}
We say that a topological group $G$ \textit{has no small subgroups}, or \textit{is NSS}, if there exists an open subset $V\ni 1_G$ in $G$ such that no non-trivial subgroup of $G$ contained in $V$.
A Polish group $G$ is called \textit{strongly NSS} if there exists an open set $V\ni 1_G$ in $G$ such that
$$\forall(g_n)\in G^\omega\,(\lim_n g_n\ne 1_G\Rightarrow\exists n_0<\cdots<n_k\,(g_{n_0}\cdots g_{n_k}\notin V)),$$
where the set $V$ is called an \emph{unenclosed set} of $G$. All Lie groups are strongly NSS~(cf.~\cite[Proposition 5.2(2)]{DZtsi} and~\cite[Proposition 2.17]{HM07} ).

Now we begin to prove the main Rigid theorem.

Let $G,H$ be TSI Polish groups such that $H$ is a closed subgroup of $\prod_m H_m$, where all $H_m$ are separable TSI Lie groups. Then $H_m$ is also locally compact and strongly NSS.

Let $d_G,d_{H_m}\leq 1$ be compatible complete two-sided invariant metrics on $G$ and $H_m$ respectively.
A compatible complete two-sided invariant metric $d_H$ on $H$ is defined as
$$d_H(h,h')=\sum_{m=0}^\infty 2^{-m}d_{H_m}(h(m),h'(m))$$
for $h,h'\in H$.

For $m\in\omega$, let $\pi_m:\prod_m H_m\to H_m$ be the canonical projection map such that $\pi_m(h)=h(m)$ for $h\in\prod_m H_m$.
It is clear that every $\pi_m$ is a continuous homomorphism.

Assume that $E_*(G)\le_BE_*(H)$. Let $(F_n)$ be a sequence of finite subsets of $G$ such that
\begin{enumerate}
\item[(i)] $1_G\in F_n=F_n^{-1}$;
\item[(ii)] $F_{n-1}^{n}\subseteq F_n$; and
\item[(iii)] $\bigcup_nF_n$ is dense in $G$.
\end{enumerate}

Denote by $E$ the restriction of $E_*(G)$ on $\prod_nF_n$. By~\cite[Lemma 2.6]{DZtsi}, there exist an infinite set $I\subseteq\omega$ and a $w\in\prod_{n\notin I}F_n$ such that $E|_I^w\le_AE_*(H)$. So there are natural numbers $0=n_0<n_1<n_2<\cdots$ with $I=\{n_j:j\in\omega\}$, $0=l_0<l_1<l_2<\cdots$, maps $T_{n_j}:F_{n_j}\to H^{l_{j+1}-l_j}$, and $\psi:\prod_{n\in I}F_n\to H^\omega$ with
$$\psi(x)=T_{n_0}(x(n_0))^\smallfrown T_{n_1}(x(n_1))^\smallfrown T_{n_2}(x(n_2))^\smallfrown\cdots,$$
such that $\psi$ is an additive reduction of $E|_I^w$ to $E_*(H)$.

Put $u_{n_0}=1_G$, and for $j>0$, let
$$u_{n_j}=w(n_{j-1}+1)\cdots w(n_j-1).$$
It follows from (i) and (ii) that $u_{n_j}^{-1}\in F_{n_j}$ for each $j\in\omega$.

Define $x_0\in\prod_{n\in I}F_n$ as
$$x_0(n_j)=u_{n_j}^{-1}\quad(\forall j\in\omega).$$
We may assume that $\psi(x_ 0)=1_{H^\omega}$  (see Page 9 of~\cite{DZtsi}). Then $T_{n_j}(u_{n_j}^{-1})=1_H$.

For $s=(h_0,\ldots,h_{l-1})$ and $t=(h_0',\ldots,h_{l-1}')$ in $H^l$, we define
$$d_H^\infty(s,t)=\max_{0\le i\le m<l}d_H(h_i\cdots h_m,h_i'\cdots h_m').$$

For any integer $j>0$ and $g\in F_{n_j-1}$, we have
$$u_{n_j}^{-1}g=w(n_j-1)^{-1}\cdots w(n_{j-1}+1)^{-1}g\in F_{n_j-1}^{n_j}\subseteq F_{n_j}.$$

\begin{lemma}[{\cite[Lemma 3.1]{DZtsi}}]\label{continuous property}
For any $q\in\omega$, there exists a $\delta_q>0$ such that
$$\forall^\infty n\in I\,\forall g,g'\in F_{n-1}\,(d_G(g,g')<\delta_q\Rightarrow d_H^\infty(T_n(u_n^{-1}g),T_n(u_n^{-1}g'))<2^{-q}).$$
\end{lemma}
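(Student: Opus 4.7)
The plan is to argue by contradiction. Suppose the conclusion fails for a fixed $q$. Then for each $k\in\omega$ one extracts $m_k\in I$ (arranged so that $m_0<m_1<\cdots$) together with witnesses $g_k,g_k'\in F_{m_k-1}$ satisfying $d_G(g_k,g_k')<2^{-k}$ and $d_H^\infty(T_{m_k}(u_{m_k}^{-1}g_k),T_{m_k}(u_{m_k}^{-1}g_k'))\ge 2^{-q}$. Writing $J=\{m_k:k\in\omega\}$, I define $x,y\in\prod_{n\in I}F_n$ by setting $x(n)=y(n)=u_n^{-1}$ for $n\in I\setminus J$ and $x(m_k)=u_{m_k}^{-1}g_k$, $y(m_k)=u_{m_k}^{-1}g_k'$ for each $k$; the inclusion $u_n^{-1}g\in F_n$ displayed immediately before the lemma certifies that these values lie in the required sets.

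Next I verify $xE|_I^w y$. Denote by $\tilde x,\tilde y\in G^\omega$ the sequences obtained by filling in $w(n)$ on $\omega\setminus I$. Then $\tilde x(n)=\tilde y(n)$ except on $J$, where $d_G(\tilde x(m_k),\tilde y(m_k))=d_G(u_{m_k}^{-1}g_k,u_{m_k}^{-1}g_k')=d_G(g_k,g_k')<2^{-k}$ by left invariance. Setting $s_N=\tilde x(0)\cdots\tilde x(N)\tilde y(N)^{-1}\cdots\tilde y(0)^{-1}$ and applying two-sided invariance once more gives $d_G(s_N,s_{N-1})=d_G(\tilde x(N),\tilde y(N))$; summability of the right-hand side yields that $(s_N)$ is Cauchy in the complete metric $d_G$, so $\tilde x E_*(G)\tilde y$.

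Since $\psi$ is a reduction, $\psi(x)E_*(H)\psi(y)$ must hold, so $\lim_m\sup_{n\ge m}d_H((\psi(x),\psi(y))|_m^n,1_H)=0$. However, for $n\in I\setminus J$ the block $T_n(u_n^{-1})$ is trivial (from $\psi(x_0)=1_{H^\omega}$), hence $\psi(x)$ and $\psi(y)$ agree on all $H$-positions outside the blocks indexed by $J$. For each $k$, let $j_k$ be the position of $m_k$ in $I$ and choose $i_k\le p_k$ inside $[l_{j_k},l_{j_k+1})$ witnessing $d_H^\infty\ge 2^{-q}$; after applying two-sided invariance to collapse the $\psi(y)$-inverses into a direct comparison, one obtains $d_H((\psi(x),\psi(y))|_{l_{j_k}+i_k}^{l_{j_k}+p_k},1_H)\ge 2^{-q}$. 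Since $l_{j_k}+i_k\to\infty$, this contradicts the previous limit.

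The main technical point is the transfer between the block-local estimate $d_H^\infty\ge 2^{-q}$ and a genuine $E_*(H)$-violation: two-sided invariance, which turns the inverse-padded expressions $(\psi(x),\psi(y))|_m^n$ into direct coordinate comparisons of partial products, is the crucial tool on both sides of the argument. Everything else is standard bookkeeping in the spirit of the scheme already used in this line of work.
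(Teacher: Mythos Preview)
The paper does not give its own proof of this lemma; it is quoted verbatim as \cite[Lemma 3.1]{DZtsi}. Your contradiction argument is correct and is exactly the standard approach used in that reference: pick witnesses $g_k,g_k'$ with $d_G(g_k,g_k')<2^{-k}$ violating the $d_H^\infty$ bound on blocks $m_k\to\infty$, splice them into $x,y$ against the default value $u_n^{-1}$, use two-sided invariance and summability to get $\tilde xE_*(G)\tilde y$, and then read off an $E_*(H)$-violation from the block-local $d_H^\infty$ estimate via right invariance of $d_H$.
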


\begin{definition}[{\cite[Definition 3.2]{DZtsi}}]\label{S_n_j}
For any $g\in\bigcup_nF_n$, if $g\in F_n$ for some $n<n_j$, then $u_{n_j}^{-1}g\in F_{n_j}$, so $T_{n_j}(u_{n_j}^{-1}g)\in H^{l_{j+1}-l_j}$. This allows us to define
$$S_{n_j}(g)=T_{n_j}(u_{n_j}^{-1}g)(0)\cdots T_{n_j}(u_{n_j}^{-1}g)(l_{j+1}-l_j-1)\in H.$$
\end{definition}

Recall that $\psi(x_ 0)=1_{H^\omega}$, this implies that
$$S_{n_j}(1_G)=T_{n_j}(u_{n_j}^{-1})(0)\cdots T_{n_j}(u_{n_j}^{-1})(l_{j+1}-l_j-1)=1_H$$
for all $j\in\omega$. It is clear that, for $g,g'\in F_n$ with $n<n_j$,
$$d_H(S_{n_j}(g),S_{n_j}(g'))\le d_H^\infty(T_{n_j}(u_{n_j}^{-1}g),T_{n_j}(u_{n_j}^{-1}g')).$$

By~\cite[Theorem 6.4]{DZtsi}, there exists a continuous homomorphism $S:G_0\rightarrow H$ such that $\ker(S)$ is non-archimedean. The map $S$ is defined by
$$S(g)=(\pi^S_0(g),\pi^S_1(g),\ldots,\pi^S_m(g),\ldots),$$
where $\pi^S_m$ is an uniformly continuous homomorphism from $G_0$ to $H_m$ satisfying $\pi^S_m(g)=\lim_{j}\pi_m(S_{n_{j}}(g))$ for all $g\in G_0\cap\bigcup_n F_n$.

For each $m\in\omega$, by~\cite[Exercise 2.1.4]{gaobook},
we can find a compact neighborhood $K_m\subseteq(H_m)_0$ of $1_{H_m}$ and $\varepsilon_m^*>0$ such that $\{h\in H_m: d_{H_m}(h,1_{H_m})\leq\varepsilon_m^*\}\subseteq K_m$ and $hK_mh^{-1}=K_m$ holds for each $h\in H_m$, where $(H_m)_0$ is the connected component of $1_{H_m}$ in $H_m$.

Next we prove some technical lemmas.
\begin{lemma}\label{lem K_m}
Let $m\in\omega$, and let $(f_n)$ be a sequence of isometries of $(K_m, d_{H_m})$ onto itself. Then there are natural numbers $k_0<k'_0<k_1<k'_1\cdots<k_{i}<k'_{i}<\cdots$ such that
$$\sup_{h\in K_m}d_{H_m}(f_{k'_i}\circ f_{k'_i-1}\circ\cdots f_{k_i+1}\circ f_{k_i}(h),h)<2^{-i}.$$
\end{lemma}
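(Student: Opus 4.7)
The plan is to exploit the compactness of the group of surjective isometries of the compact metric space $(K_m, d_{H_m})$ in the uniform topology, combined with a passage from the individual $f_n$ to the partial compositions
$$P_n := f_n \circ f_{n-1} \circ \cdots \circ f_0,$$
each of which is again a surjective isometry of $K_m$ onto itself.

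The first substantive step is to observe that $\mathrm{Iso}(K_m, d_{H_m})$, equipped with the uniform metric $\rho(f,g) := \sup_{h \in K_m} d_{H_m}(f(h), g(h))$, is a compact metric space. This is a standard Arzel\`a--Ascoli argument: isometries are $1$-Lipschitz and take values in the compact set $K_m$, so the family is equicontinuous and pointwise relatively compact, and the uniform limit of surjective isometries of a compact space is again such an isometry. Sequential compactness then yields a Cauchy subsequence $(P_{n_j})_{j \in \omega}$ with respect to $\rho$. Using the Cauchy condition, I would pick inductively indices $j_0 < j'_0 < j_1 < j'_1 < \cdots$ so that
$$\rho\bigl(P_{n_{j'_i}},\, P_{n_{j_i}}\bigr) < 2^{-i} \quad \text{for every } i \in \omega.$$

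Finally, set $k_i := n_{j_i} + 1$ and $k'_i := n_{j'_i}$; the ordering $k_0 < k'_0 < k_1 < k'_1 < \cdots$ is inherited from that of the indices $j_i, j'_i$. A telescoping cancellation collapses $P_{n_{j'_i}} \circ P_{n_{j_i}}^{-1}$ to $f_{k'_i} \circ f_{k'_i - 1} \circ \cdots \circ f_{k_i}$, and since $P_{n_{j_i}}$ is a bijection of $K_m$ onto itself, substituting $g = P_{n_{j_i}}^{-1}(h)$ gives
$$\sup_{h \in K_m} d_{H_m}\bigl(f_{k'_i} \circ \cdots \circ f_{k_i}(h),\, h\bigr) = \sup_{g \in K_m} d_{H_m}\bigl(P_{n_{j'_i}}(g),\, P_{n_{j_i}}(g)\bigr) = \rho\bigl(P_{n_{j'_i}}, P_{n_{j_i}}\bigr) < 2^{-i}.$$
No genuine obstacle is anticipated; the only point requiring care is compactness of $\mathrm{Iso}(K_m)$ in the uniform metric, and the key conceptual move is passing to the partial products $P_n$ so that the Cauchy condition for the $P_n$ directly encodes the required uniform closeness to the identity for a composition of consecutive $f_k$'s.
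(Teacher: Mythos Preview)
Your proposal is correct and follows essentially the same strategy as the paper: both pass to the partial compositions $P_n=f_n\circ\cdots\circ f_0$, use compactness of $K_m$ to make a subsequence of $(P_n)$ Cauchy, and then telescope using surjectivity. The only difference is packaging---you invoke Arzel\`a--Ascoli to get compactness of $\mathrm{Iso}(K_m)$ in the uniform metric at once, whereas the paper carries out the equivalent argument by hand via a finite $\varepsilon/4$-net in $K_m$; just be sure when choosing $j_i<j'_i$ to take $n_{j'_i}\ge n_{j_i}+2$ so that the required strict inequalities $k_i<k'_i<k_{i+1}$ actually hold.
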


\begin{proof}
We only need to show that
$$\forall\varepsilon>0\,\forall M\,\exists q'>q>M\,(\sup_{h\in K_m}d_{H_m}(f_{q'}\circ f_{q'-1}\circ\cdots f_{q+1}\circ f_{q}(h),h)<\varepsilon).$$

Fix a real number $\varepsilon>0$ and a natural number $M$.  By the completeness of $K_m$, we can find  finitely many elements $h_0,h_1,\cdots h_p$ in $K_m$ such that, for any $h\in K_m$, there is some $h_i$ with $d_{H_m}(h,h_i)<4^{-1}{\varepsilon}$.

For $i\leq p$ and $n\in\omega$, denote $s(h_i,n)=f_{n}\circ f_{n-1}\circ\cdots f_{1}\circ f_{0}(h_i)\in K_m$. Since $K_m$ is compact, there are $j_0<j_1<\cdots<j_n<\cdots$ so that $\lim_ns(h_i,j_n)$ exists for all $i\leq p$. Then by the completeness of $d_H$, there are $j_{k},j_{k'}$ with $M<j_{k}<j_{k'}-1$ and
$$\forall i\leq p\,(d_{H_m}(s(h_i,j_k),s(h_i,j_{k'}))<4^{-1}\varepsilon).$$

Let $h\in K_m$, and let $h_i\in K_m$ with $d_{H_m}(h,h_i)<4^{-1}\varepsilon$.
Note that all $f_n$ are isometric, so
$$\begin{aligned}
d_{H_m}(s(h,j_k),s(h,j_{k'}))&\leq d_{H_m}(s(h,j_k),s(h_i,j_k))+ d_{H_m}(s(h_i,j_k),s(h_i,j_{k'}))\\
&+d_{H_m}(s(h,j_{k'}),s(h_i,j_{k'}))\leq 3\times4^{-1}\varepsilon<\varepsilon.\\
  \end{aligned}$$
And thus $d_{H_m}(f_{j_{k}}\circ f_{j_{k}-1}\circ\cdots f_{1}\circ f_{0}(h),f_{j_{k'}}\circ f_{j_{k'}-1}\circ\cdots f_{1}\circ f_{0}(h))<\varepsilon$ for all $h\in K_m$. Again since all $f_n$ are surjective, we have $$d_{H_m}(f_{j_{k'}}\circ f_{j_{k'}-1}\circ\cdots f_{j_k+2}\circ f_{j_k+1}(h),h)<\varepsilon\,(\forall h\in K_m).$$
Thus $q=j_k+1$ and $q'=j_{k'}$ are as required.
\end{proof}

Now fix some $m\in\omega$. Since $H_m$ is strongly NSS, there is a positive real number $\varepsilon^*<\varepsilon^*_m$ so that $V=\{h\in H_m:d_{H_m}(h,1_{H_m})<\varepsilon^*\}$ is an unenclosed set of $H_m$.

Then by Lemma~\ref{continuous property}, there is a $\delta_m^*>0$ such that
$$\forall^\infty n\in I\,\forall g\in F_{n-1}\,(d_G(g,1_G)<\delta_m^*\Rightarrow d_{H_m}(\pi_m(S_{n}(g)),1_{H_m})<4^{-1}\varepsilon^*).$$

Denote $X_m=\{g\in G_0\cap\bigcup_n F_n:d_G(g,1_G)<\delta_m^*\}$.

\begin{lemma}\label{lemm g conge}
Let $g\in X_m$ and $M\in\omega$. For any $v\in \bigcup_n F_n$,
if $$\lim_j \pi_m(S_{n_{j-1}}(vgv^{-1})S_{n_{j}}(v)S_{n_{j+1}}(g^{-1})S_{n_{j}}(v)^{-1})\neq1_{H_m},$$ then there are $l(0),l(1),\cdots,l(q)\in\omega$ and a sequence of natural numbers $p(i,0)<p(i,1)<\cdots <p(i,l(i))$ for $i\leq q$ such that
\begin{enumerate}
\item [(\romannumeral 1)]$M+2<p(0,0),\,p(i,0)+1<p(i,1)$ and $p(i,l(i))+1<p(i+1,0);$
  \item[(\romannumeral 2)] $1_G,v,g\in F_{n_{p(0,0)-3}};$
  \item [(\romannumeral 3)]for brevity, we denote $$A^{i}=\pi_m(S_{n_{p(i,0)-1}}(vgv^{-1})S_{n_{p(i,0)}}(v)S_{n_{p(i,0)+1}}(g^{-1})),$$
      $$B^{i}=\pi_m(S_{n_{p(i,1)}}(v)S_{n_{p(i,2)}}(v)\cdots S_{n_{p(i,l(i))}}(v)),$$
then $$d_{H_m}(A^{0}B^{0}\cdots A^{q}B^{q},\pi_m(S_{n_{p(0,0)}}(v))B^{0}\cdots \pi_m(S_{n_{p(q,0)}}(v))B^{q})>4^{-1}\varepsilon^*.$$

\end{enumerate}
\end{lemma}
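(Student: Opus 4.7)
The plan is to rewrite~(iii) as the single equivalent inequality $d_{H_m}(\tilde{c}^0\tilde{c}^1\cdots\tilde{c}^q,1_{H_m})>4^{-1}\varepsilon^*$, where I set $\gamma_j:=\pi_m(S_{n_j}(v))$, $c_j:=\pi_m(S_{n_{j-1}}(vgv^{-1})S_{n_j}(v)S_{n_{j+1}}(g^{-1})S_{n_j}(v)^{-1})$, $W^i:=\gamma_{p(i,0)}\gamma_{p(i,1)}\cdots\gamma_{p(i,l(i))}$, $A_i:=W^0\cdots W^{i-1}$, and $\tilde{c}^i:=A_ic_{p(i,0)}A_i^{-1}$. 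The identity $A^iB^i=c_{p(i,0)}W^i$ is immediate, and right-multiplying both products in~(iii) by $(W^0\cdots W^q)^{-1}$ and using two-sided invariance of $d_{H_m}$ collapses the distance to $d_{H_m}(\tilde{c}^0\cdots\tilde{c}^q,1_{H_m})$, delivering the claimed equivalence.

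I first localize $c_j$. The two-sided invariance of $d_G$ and $g\in X_m$ give $d_G(vgv^{-1},1_G)=d_G(g^{-1},1_G)=d_G(g,1_G)<\delta_m^*$, so the defining property of $\delta_m^*$ places both $\pi_m(S_n(vgv^{-1}))$ and $\pi_m(S_n(g^{-1}))$ in $\{h:d_{H_m}(h,1_{H_m})<4^{-1}\varepsilon^*\}\subseteq K_m$ for all large $n\in I$. Conjugation invariance of $K_m$ and two-sided invariance of $d_{H_m}$ then force $c_j\in K_m$ with $d_{H_m}(c_j,1_{H_m})<2^{-1}\varepsilon^*$, hence $c_j\in V$. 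The failure hypothesis therefore supplies some $\eta\in(0,2^{-1}\varepsilon^*)$ and an infinite $J\subseteq I$ with $d_{H_m}(c_j,1_{H_m})\ge\eta$ for all $j\in J$.

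Enumerate $J=\{j_0<j_1<\cdots\}$, discarding initial entries so that $j_0>M+3$ and $1_G,v,g\in F_{n_{j_0-3}}$ (available by density of $\bigcup_nF_n$ and the growth of $(F_n)$). The sequence $(c_{j_s})_s$ has norms at least $\eta$ and so does not converge to $1_{H_m}$; applying the strongly-NSS property of $V$ yields $s_0<s_1<\cdots<s_q$ with $c_{j_{s_0}}\cdots c_{j_{s_q}}\notin V$, i.e.\ at $d_{H_m}$-distance $\ge\varepsilon^*$ from $1_{H_m}$. Set $p(i,0):=j_{s_i}$; this delivers~(ii) and the $p(i,0)$-part of~(i). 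The remaining $l(i)$ and $p(i,1)<\cdots<p(i,l(i))$ (subject to the gap constraints) are chosen inductively so that each $\mathrm{conj}_{W^i}$ is $\varepsilon_i$-close to the identity on $K_m$ with $\varepsilon_i:=\varepsilon^*/(4(q+1))$. The telescoping estimate $d_{H_m}(A_{i+1}hA_{i+1}^{-1},h)\le d_{H_m}(A_ihA_i^{-1},h)+d_{H_m}(W^ih(W^i)^{-1},h)$ then keeps the cumulative conjugation error below $\sum_{i\le q}\varepsilon_i\le\varepsilon^*/4$. Since conjugation is $d_{H_m}$-isometric, a telescoping triangle inequality gives $d_{H_m}(\tilde{c}^0\cdots\tilde{c}^q,c_{j_{s_0}}\cdots c_{j_{s_q}})\le\varepsilon^*/4$, and consequently $d_{H_m}(\tilde{c}^0\cdots\tilde{c}^q,1_{H_m})\ge\varepsilon^*-\varepsilon^*/4>4^{-1}\varepsilon^*$.

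The main obstacle lies in the step of forcing $\mathrm{conj}_{W^i}$ to be $\varepsilon_i$-near-identity on $K_m$, because $W^i$ must begin with the possibly non-neutral factor $\gamma_{p(i,0)}$ and Lemma~\ref{lem K_m} cannot be applied directly to a consecutive range of the isometries $f_n:=\mathrm{conj}_{\gamma_n}$. I expect to resolve this by a compactness argument inside the compact isometry group of $K_m$: the closure of the sub-semigroup generated by $\{f_n\}_{n>p(i,0)+1}$ is a closed subgroup and so accumulates at $\mathrm{conj}_{\gamma_{p(i,0)}^{-1}}$, so Lemma~\ref{lem K_m} applied to this shifted family produces a tail product $\gamma_{p(i,1)}\cdots\gamma_{p(i,l(i))}$ whose conjugation cancels $\mathrm{conj}_{\gamma_{p(i,0)}}$ to within $\varepsilon_i$.
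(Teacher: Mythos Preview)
Your reformulation of (iii) as $d_{H_m}(\tilde c^0\cdots\tilde c^q,1_{H_m})>4^{-1}\varepsilon^*$ is correct, and the localization $c_j\in K_m$ with $d_{H_m}(c_j,1_{H_m})<2^{-1}\varepsilon^*$ is fine. The genuine gap is the final step. Once strongly NSS has fixed $p(0,0)<\cdots<p(q,0)$, the constraint $p(i,l(i))+1<p(i+1,0)$ forces $p(i,1),\dots,p(i,l(i))$ to lie in the \emph{finite} interval $(p(i,0)+1,\,p(i+1,0)-1)$, so the infinite tail $\{f_n\}_{n>p(i,0)+1}$ in your semigroup-closure argument is already illegal. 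More seriously, even granting an infinite tail, the fact that the closure of a sub-semigroup of the (compact) isometry group of $K_m$ is a group says nothing about whether that group contains the particular element $\mathrm{conj}_{\gamma_{p(i,0)}^{-1}}$: nothing in the setup relates $\gamma_{p(i,0)}=\pi_m(S_{n_{p(i,0)}}(v))$ to products of later $\gamma_n$'s. (A minor side issue: with $\varepsilon_i=\varepsilon^*/(4(q+1))$ your telescoped bound is $\sum_{i\le q}\sum_{j<i}\varepsilon_j=q\varepsilon^*/8$, not $\varepsilon^*/4$; this is repairable since $q$ is known before you choose the $\varepsilon_i$.)

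The paper avoids the obstruction by reversing the order of the two ingredients. It applies Lemma~\ref{lem K_m} \emph{first}, to the isometries $f_k=\mathrm{conj}_{\gamma_{j(k)}^{-1}}$ indexed by the bad set, obtaining infinitely many consecutive blocks $[k_r,k'_r]$ for which the \emph{whole} product $\gamma_{j(k_r)}\cdots\gamma_{j(k'_r)}$ already acts by a near-identity conjugation on $K_m$; thus $p(i,0)$ is taken to be the head $j(k_{2t(i)})$ of such a block and $B^i$ is simply the rest of that block, so no separate cancellation of $\gamma_{p(i,0)}$ is required. The strongly-NSS property is then invoked \emph{by contradiction}: assuming (iii) fails for every finite subselection $t(0)<\cdots<t(q)$ of blocks, the failed inequality itself bounds the partial quotients $\hat s(i')^{-1}s(i')$ inside $K_m$, which is exactly what is needed to run a single (not doubled) recursive conjugation estimate and trap every product $h_{l(1)}\cdots h_{l(i)}$ in $V$, contradicting that $V$ is unenclosed.
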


\begin{proof}
First we can find a real number $\epsilon>0$ and a sequence of natural numbers $M+2<j(0)<j(1)<\cdots<j(k)<\cdots$ such that $1_G,v,g\in F_{n_{j(0)-3}}$, $j(k)+1<j(k+1)$ and $$d_{H_m}(\pi_m(S_{n_{j(k)-1}}(vgv^{-1})S_{n_{j(k)}}(v)S_{n_{j(k)+1}}(g^{-1}))S_{n_{j(k)}}(v)^{-1}),1_{H_m})>\epsilon.$$
Note that $\pi^S_m$ is continuous, $\pi^S_m(g')=\lim_{j}\pi_m(S_{n_{j}}(g'))$ for all $g'\in G_0$ and $g, vgv^{-1}\in G_0,$
so $\pi^S_m(g),\pi^S_m(vgv^{-1})\in(H_m)_0$~(the connected component of $1_{H_m}$ in $H_m$). Thus we may assume that $$\pi_m(S_{n_{j(k)+1}}(g^{-1})),\pi_m(S_{n_{j(k)-1}}(vgv^{-1}))\in(H_m)_0~(\forall k\in\omega).$$ This is because, $(H_m)_0$ is open in $H_m$.

For $k\in\omega$, let $f_k:(K_m, d_{H_m})\rightarrow (K_m, d_{H_m})$ be the isometry with $f_k(h)=\pi_m(S_{n_{j(k)}}(v))^{-1}h\pi_m(S_{n_{j(k)}}(v))$. Put an $i^*\in\omega$ with $2^{-i^*}<4^{-1}\varepsilon^*$.
Then it follows from Lemma~\ref{lem K_m} that
there are natural numbers $k_0<k'_0<k_1<k'_1\cdots<k_{i}<k'_{i}<\cdots$ such that
$$\sup_{h\in K_m}d_{H_m}(f_{k'_i}\circ f_{k'_i-1}\circ\cdots f_{k_i+1}\circ f_{k_i}(h),h)<2^{-(i+i^*)}.\eqno{(*)}$$

For $i\in\omega$, denote   $$A_{i}=\pi_m(S_{n_{j(k_{2i})-1}}(vgv^{-1})S_{n_{j(k_{2i})}}(v)S_{n_{j(k_{2i})+1}}(g^{-1})),$$
      $$B_{i}=\pi_m(S_{n_{j(k_{2i}+1)}}(v)S_{n_{j(k_{2i}+2)}}(v)\cdots S_{n_{j(k'_{2i})}}(v)).$$

Assume toward a contradiction that, for any natural numbers $t(q)>t(q-1)>\cdots>t(0)>i^*$, denote $h^*_{t(i)}=\pi_m(S_{n_{j(k_{2t(i)})}}(v))B_{t(i)}$, then we have
$$d_{H_m}(A_{t(0)}B_{t(0)}\cdots A_{t(q)}B_{t(q)},h^*_{t(0)}\cdots h^*_{t(q)})\leq 4^{-1}{\varepsilon^*}.\eqno{(**)}$$

For each $i\in\omega$, denote $h_i=A_{i}\pi_m(S_{n_{j(k_{2i})}}(v)^{-1})$. It is clear that $\lim_i h_i\neq 1_{H_m}$. Recall the following fact,
 $$\forall^\infty n\in I\,\forall g\in F_{n-1}\,(d_G(g,1_G)<\delta_m^*\Rightarrow d_{H_m}(\pi_m(S_{n}(g)),1_{H_m})<4^{-1}\varepsilon^*).$$
Since $g, vgv^{-1}\in X_m=\{g\in G_0\cap\bigcup_n F_n:d_G(g,1_G)<\delta_m^*\}$, there is some $i_1^*\in\omega$ so that $d_{H_m}(h_i,1_{H_m})<2^{-1}\varepsilon^*$ for all $i\geq i_1^*$.

Note that $\pi_m(S_{n_{j(k)+1}}(g^{-1})),\pi_m(S_{n_{j(k)-1}}(vgv^{-1}))\in(H_m)_0$ for all $k\in\omega$, so $h_i\in (H_m)_0$.
Now we fix some $l(i)>l(i-1)>\cdots>l(0)>i^*+i^*_1$, then it follows from condition $(*)$ that
$$\sup_{h\in K_m}d_{H_m}(B_{l(i')}^{-1}\pi_m(S_{n_{j(k_{2l(i')})}}(v)^{-1})h\pi_m(S_{n_{j(k_{2l(i')})}}(v))B_{l(i')},h)<2^{-(i'+i^*)},$$
where $i'\leq i$.
Denote $s(i')=A_{l(0)}B_{l(0)}\cdots A_{l(i')}B_{l(i')}$ and
$$\hat{s}(i')=\pi_m(S_{n_{j(k_{2l(0)})}}(v))B_{l(0)}\cdots \pi_m(S_{n_{j(k_{2l(i')})}}(v)) B_{l(i')}.$$
By condition $(**)$ and the fact that $\{h\in H_m: d_{H_m}(h,1_{H_m})\leq\varepsilon_m^*\}\subseteq K_m$, we have
$$d_{H_m}(\hat{s}(i')^{-1}s(i'),1_{H_m})\leq4^{-1}{\varepsilon^*},\quad \hat{s}(i')^{-1}s(i')\in K_m\subseteq(H_m)_0,$$
$$d_{H_m}(\hat{s}(i'-1)^{-1}s(i'-1)h_{l(i')},1_{H_m})<{\varepsilon^*},\,\hat{s}(i'-1)^{-1}s(i'-1)h_{l(i')}\in K_m,$$
Note that
$$\hat{s}(i')^{-1}s(i')=B_{l(i')}^{-1}\pi_m(S_{n_{j(k_{2l(i')})}}(v)^{-1})\hat{s}(i'-1))^{-1}s(i'-1)A_{l(i')}B_{l(i')},$$ $$\hat{s}(i'-1))^{-1}s(i'-1)h_{l(i')}=\hat{s}(i'-1))^{-1}s(i'-1)A_{l(i')}\pi_m(S_{n_{j(k_{2l(i')})}}(v)^{-1})\in K_m$$
and $$\sup_{h\in K_m}d_{H_m}(B_{l(i')}^{-1}\pi_m(S_{n_{j(k_{2l(i')})}}(v)^{-1})h\pi_m(S_{n_{j(k_{2l(i')})}}(v))B_{l(i')},h)<2^{-(i'+i^*)} ,$$
so
$$
d_{H_m}(\hat{s}(i')^{-1}s(i'),\hat{s}(i'-1))^{-1}s(i'-1)h_{l(i')})
<2^{-(i'+i^*)}.
$$
This implies that $$d_{H_m}(\hat{s}(i)^{-1}s(i),\hat{s}(0)^{-1}s(0)h_{l(1)}h_{l(2)}\cdots h_{l(i)})<\sum_{i'\leq i}2^{-(i'+i^*)}<2^{-1}\varepsilon_*.$$
By the fact that $d_{H_m}(\hat{s}(i')^{-1}s(i'),1_{H_m})\leq 4^{-1}{\varepsilon^*}$, we have
$$
\begin{aligned}
d_{H_m}(h_{l(1)}h_{l(2)}\cdots h_{l(i)},1_{H_m})&\leq d_{H_m}(\hat{s}(i)^{-1}s(i),\hat{s}(0)^{-1}s(0))+2^{-1}\varepsilon_*\cr
&\leq4^{-1}{\varepsilon^*}+4^{-1}{\varepsilon^*}+2^{-1}\varepsilon_*\cr
&\leq\varepsilon_*.
\end{aligned}
$$

In other words, for any $l(i)>l(i-1)>\cdots>l(1)>i^*$, we get $h_{l(1)}h_{l(2)}\cdots h_{l(i)}\in V$, but $\lim_i h_i\neq 1_{H_m}$. This contradicts the assumption that $V$ is an unenclosed set of $H_m$.

Now we can find $t(q)>t(q-1)>\cdots>t(0)>i^*$ such that
$$d_{H_m}(A_{t(0)}B_{t(0)}\cdots A_{t(q)}B_{t(q)},h^*_{t(0)}\cdots h^*_{t(q)})> 4^{-1}{\varepsilon^*},$$
where $h^*_{t(i)}=\pi_m(S_{n_{j(k_{2t(i)})}}(v))B_{t(i)}$. Put $p(i,0)=j({k_{2t(i)}})$, $l(i)=k'_{2t(i)}-k_{2t(i)}$ and $p(i,l)=j({k_{2t(i)}+l})$ for $l\leq l(i),i\leq q$. Then $A^i=A_{t(i)}$ and $B^i=B_{t(i)}$.
Thus (i) through (iii) are satisfied.
\end{proof}

\begin{lemma}\label{lemm pre conjuat}
There is a real number $0<\delta<\delta_m^*$ so that, for any $g\in X_m$ and $v\in \bigcup_n F_n$,  we have
$$d_G(g,1_G)<\delta\Rightarrow \lim_j \pi_m(S_{n_{j-1}}(vgv^{-1})S_{n_{j}}(v)S_{n_{j+1}}(g^{-1})S_{n_{j}}(v)^{-1})=1_{H_m}.$$
\end{lemma}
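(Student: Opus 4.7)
My plan is a contradiction argument combining Lemma~\ref{lemm g conge} with Lemma~\ref{continuous property}. Suppose no such $\delta$ exists; then for arbitrarily small $\delta>0$ one can find $g\in X_m$ with $d_G(g,1_G)<\delta$ and $v\in\bigcup_n F_n$ for which
$$\lim_j\pi_m(S_{n_{j-1}}(vgv^{-1})S_{n_{j}}(v)S_{n_{j+1}}(g^{-1})S_{n_{j}}(v)^{-1})\neq 1_{H_m}.$$
For such a pair I apply Lemma~\ref{lemm g conge} with some large $M\in\omega$ to obtain indices $p(i,l)$, lengths $l(i)$, and an integer $q$ satisfying conditions (i)--(iii). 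The target is to contradict (iii), i.e.\ the inequality
$$d_{H_m}(A^0B^0\cdots A^qB^q,\,\pi_m(S_{n_{p(0,0)}}(v))B^0\cdots \pi_m(S_{n_{p(q,0)}}(v))B^q)>4^{-1}\varepsilon^*.$$

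The enabling observation is the two-sided invariance of $d_G$, which gives $d_G(vgv^{-1},1_G)=d_G(g,1_G)$, so the smallness of $g$ transfers uniformly in $v$ to $vgv^{-1}$ and to $g^{-1}$. For any $q'\in\omega$, choosing $\delta<\delta_{q'}$ via Lemma~\ref{continuous property} gives, for all sufficiently large $n\in I$ (with $g,g^{-1},vgv^{-1}\in F_{n-1}$), the bound $d_H^\infty(T_n(u_n^{-1}g'),1)<2^{-q'}$ for $g'\in\{g^{-1},vgv^{-1}\}$, using $T_n(u_n^{-1})=1$. Consequently $d_{H_m}(\pi_m(S_n(g')),1_{H_m})\le 2^m d_H(S_n(g'),1_H)<2^{m-q'}$. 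By choosing $M$ large enough inside the application of Lemma~\ref{lemm g conge}, these uniform bounds apply to every increment $a^i=\pi_m(S_{n_{p(i,0)-1}}(vgv^{-1}))$ and $c^i=\pi_m(S_{n_{p(i,0)+1}}(g^{-1}))$ appearing inside $A^i=a^ib^ic^i$, where $b^i=\pi_m(S_{n_{p(i,0)}}(v))$.

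For the upper bound on the LHS of (iii), I telescope: set $Z^i=b^iB^i$ and $\alpha^i=a^ib^ic^i(b^i)^{-1}$, so that $A^iB^i=\alpha^iZ^i$. A standard manipulation then gives $XY^{-1}=\prod_i\gamma_i$, where each $\gamma_i=(Z^0Z^1\cdots Z^{i-1})\alpha^i(Z^0Z^1\cdots Z^{i-1})^{-1}$ is a conjugate of $\alpha^i$, so by conjugation-invariance of $d_{H_m}$,
$$d_{H_m}(X,Y)\le \sum_{i=0}^q d_{H_m}(\alpha^i,1_{H_m})\le \sum_{i=0}^q\bigl[d_{H_m}(a^i,1)+d_{H_m}(c^i,1)\bigr]<(q+1)\cdot 2^{m-q'+1}.$$
Combined with (iii), this forces $(q+1)\cdot 2^{m-q'+1}>4^{-1}\varepsilon^*$.

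The hard part is that the length $q$ returned by Lemma~\ref{lemm g conge} is not a priori bounded, so merely taking $q'$ large does not close the argument. I expect the fix to exploit condition $(*)$ in the proof of Lemma~\ref{lemm g conge}, which controls how each $Z^i$ conjugates elements of $K_m$ up to an error of order $2^{-2i}$. Combined with the observation that for $g$ sufficiently small the $\alpha^i$ themselves lie in $K_m$ (since $d_{H_m}(\alpha^i,1)<2^{m-q'+1}<\varepsilon_m^*$), a careful induction on the partial products $X_kY_k^{-1}$, staying within $K_m$, should replace the $(q+1)$-factor above by a geometrically convergent tail bounded independently of $q$. Choosing $q'$ so that this summable bound is at most $4^{-1}\varepsilon^*$ then contradicts (iii) and finishes the proof.
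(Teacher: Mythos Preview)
Your telescoping is correct and the diagnosis of the obstacle is accurate: the estimate $d_{H_m}(X,Y)\le (q+1)\cdot 2^{m-q'+1}$ cannot be closed, because nothing in Lemma~\ref{lemm g conge} bounds $q$, and the proposed fix does not work. Condition~$(*)$ in the proof of Lemma~\ref{lemm g conge} only says that conjugation by each block $Z^i$ is close to the identity on $K_m$; it tells you $\gamma_i$ is close to $\alpha^i$, not that $\gamma_i$ is small. Since every $\alpha^i$ has essentially the same size (governed by the fixed $g$), the product $\prod_i\gamma_i$ genuinely accumulates, and no ``geometrically convergent tail'' appears. In effect you are trying to contradict conclusion~(iii) of Lemma~\ref{lemm g conge} by a pure metric estimate, but (iii) is exactly what that lemma asserts under the hypothesis $\lim_j(\cdots)\neq 1_{H_m}$; there is no extra smallness of $g$ that overrides it, because the $q$ produced by strong NSS may grow without bound as $g\to 1_G$.

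The paper's argument avoids this entirely by going back to the reduction $\psi$. One assumes the lemma fails, chooses a \emph{sequence} $(g_k)$ with $d_G(g_k,1_G)<2^{-k}$ and corresponding $v_k$, applies Lemma~\ref{lemm g conge} to each $k$ (with $M$ large enough to separate the blocks), and then \emph{splices all the resulting blocks into a single pair} $x,y\in\prod_{n\in I}F_n$. The point is that $(x\oplus w,y\oplus w)|_0^q$ is always either $1_G$ or a conjugate of some $g_k$, and the block for $g_k$ occurs only after those for $g_0,\dots,g_{k-1}$; two-sided invariance then gives $(x\oplus w)E_*(G)(y\oplus w)$. On the $H$-side, however, each block contributes a deviation $>4^{-1}\varepsilon^*$ in the $m$-th coordinate by~(iii), so $\psi(x)E_*(H)\psi(y)$ fails, contradicting that $\psi$ is a reduction. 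The missing idea in your attempt is precisely this return to the equivalence relations: the lemma is not a local metric estimate but a consequence of $\psi$ being a Borel reduction.
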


\begin{proof}
If not, then there are a sequence $(g_k)$ in $X_m$ and $(v_k)\in (\bigcup_n F_n)^\omega$ satisfying
\begin{enumerate}
  \item [(a)] $d_{G}(g_k,1_G)<2^{-k}$;
  \item [(b)] $\lim_j \pi_m(S_{n_{j-1}}(v_kg_kv_k^{-1})S_{n_{j}}(v_k)S_{n_{j+1}}(g_k^{-1})S_{n_{j}}(v_k)^{-1}\neq1_{H_m}.$
\end{enumerate}
Then by Lemma~\ref{lemm g conge}, we can find $l(k,0),l(k,1),\cdots,l(k,q_k)\in\omega$ and a sequence of natural numbers $p(k,i,0)<p(k,i,1)<\cdots <p(k,i,l(k,i))$ for $i\leq q_k$ such that
\begin{enumerate}
  \item [(\romannumeral 1)] $p(k,q_k,l(k,q_k))+3<p(k+1,0,0)$, $p(k,i,0)+1<p(k,i,1)$ and $p(k,i,l(k,i))+1<p(k,i+1,0)$;
  \item [(\romannumeral 2)] $1_G,v_k,g_k\in F_{n_{p(k,0,0)-3}}$;
  \item [(\romannumeral 3)]for brevity, we denote $$A^{i}_k=\pi_m(S_{n_{p(k,i,0)-1}}(v_kg_kv_k^{-1})S_{n_{p(k,i,0)}}(v_k)S_{n_{p(k,i,0)+1}}(g_k^{-1})),$$
      $$B^{i}_k=\pi_m(S_{n_{p(k,i,1)}}(v_k)S_{n_{p(k,i,2)}}(v_k)\cdots S_{n_{p(k,i,l(k,i))}}(v_k)),$$
then $$d_{H_m}(A^{0}_kB^{0}_k\cdots A^{q_k}_kB^{q_k}_k,\pi_m(S_{n_{p(k,0,0)}}(v_k))B^{0}_k\cdots \pi_m(S_{n_{p(k,q_k,0)}}(v_k))B^{q_k}_k)>4^{-1}\varepsilon^*.$$
\end{enumerate}

For each $n\in I$, define
$$\begin{aligned}
  x(n)&=\left\{\begin{array}{ll}
   u_n^{-1}v_kg_kv_k^{-1}, & n=n_{p(k,i,0)-1},i\leq q_k,\cr
  u_n^{-1} v_k, &  n=n_{p(k,i,j)},0\leq j\leq l(k,i),i\leq q_k, \cr
     u_n^{-1}g_k^{-1}, & n=n_{p(k,i,0)+1},i\leq q_k,\cr
   u_n^{-1},& \mbox{otherwise},
  \end{array}\right.\\
y(n)&=\left\{\begin{array}{ll}
      u_n^{-1} v_k, &  n=n_{p(k,i,j)},0\leq j\leq l(k,i),i\leq q_k, \cr
   u_n^{-1},& \mbox{otherwise}.
  \end{array}\right.
\end{aligned}
$$
For any $q>n_{p(0,0,0)}$, the element $$(x\oplus w,y\oplus w)|_0^q\mbox{ is equal to }1_G \mbox{ or } v_0^{a_0}\cdots v_k^{a_k}g_kv_k^{-a_k}\cdots v_0^{-a_0},$$where $k,a_0,a_1,\cdots,a_k$ are some natural numbers.

 Note that $$d_G(v_0^{a_0}\cdots v_k^{a_k}g_kv_k^{-a_k}\cdots v_0^{-a_0},1_G)=d_{G}(g_k,1_G)<2^{-k},$$ so $(x\oplus w) E_*(G)(y\oplus w)$.

Then we have
$$\pi_m(\psi(x)(l_{p(k,i,0)-1})\psi(x)(l_{p(k,i,0)-1}+1)\cdots\psi(x)(l_{p(k,i,0)+2}-1))=A_k^i.$$
$$\psi(y)(l_{p(k,i,0)-1})\psi(y)(l_{p(k,i,0)-1}+1)\cdots\psi(y)(l_{p(k,i,0)+2}-1)=S_{n_{p(k,i,0)}}(v_k).$$
$$\pi_m(\psi(x)(l_{p(k,i,0)+2})\psi(x)(l_{p(k,i,0)+2}+1)\cdots\psi(x)(l_{p(k,i,l(k,i))+1}-1))=B_k^i.$$
$$\pi_m(\psi(y)(l_{p(k,i,0)+2})\psi(y)(l_{p(k,i,0)+2}+1)\cdots\psi(y)(l_{p(k,i,l(k,i))+1}-1))=B_k^i.$$
Note that $T_{n_j}(u_{n_j}^{-1})=1_H$ for all $j\in\omega$, so
$$d_{H_m}(\pi_m((\psi(x),\psi(y))|_{l_{p(k,0,0)-1}}^{l_{p(k,q_k,l(k,q_k))+1}-1}),1_{H_m})>4^{-1}\varepsilon^*.$$
This implies that $\psi(x) E_*(H)\psi(y)$ fails. This contradicts the fact that $\psi$ is a reduction.
\end{proof}

\begin{lemma}\label{lemm mconve}
For any $g\in G_0$ and $v\in \bigcup_n F_n$, we have
$$\pi^S_m(vgv^{-1})=\lim_j\pi_m(S_{n_j}(v))\pi^S_m(g)\pi_m(S_{n_j}(v))^{-1}).$$
\end{lemma}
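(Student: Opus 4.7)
The plan is to promote the limit identity from Lemma~\ref{lemm pre conjuat} to the desired statement in three stages: first derive it for small $g \in G_0 \cap \bigcup_n F_n$, then extend by equicontinuity to a neighborhood of $1_G$ in $G_0$, and finally propagate through $G_0$ via the group law.

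For the first stage, fix $v \in \bigcup_n F_n$ and write $b_j = \pi_m(S_{n_j}(v))$. Given $g \in X_m$ with $d_G(g,1_G) < \delta$ (the $\delta$ from Lemma~\ref{lemm pre conjuat}), both $vgv^{-1}$ and $g^{-1}$ lie in $G_0 \cap \bigcup_n F_n$ (since $G_0$ is normal and the $F_n$ satisfy $F_{n-1}^n \subseteq F_n$ and $F_n = F_n^{-1}$), so the separate convergences $\pi_m(S_{n_{j-1}}(vgv^{-1})) \to \pi^S_m(vgv^{-1})$ and $\pi_m(S_{n_{j+1}}(g^{-1})) \to \pi^S_m(g)^{-1}$ hold. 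Writing $\pi_m(S_{n_{j+1}}(g^{-1})) = \pi^S_m(g)^{-1}\varepsilon_j$ with $\varepsilon_j \to 1_{H_m}$, and using the two-sided invariance of $d_{H_m}$ to conclude $b_j\varepsilon_j b_j^{-1} \to 1_{H_m}$, I can isolate the factor $b_j \pi^S_m(g)^{-1} b_j^{-1}$ in the four-term product of Lemma~\ref{lemm pre conjuat}. Inverting then gives $b_j \pi^S_m(g) b_j^{-1} \to \pi^S_m(vgv^{-1})$.

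For the second stage, define $\phi_j(g) = b_j \pi^S_m(g) b_j^{-1}$. Since $\pi^S_m$ is uniformly continuous and conjugation by elements of $H_m$ is a $d_{H_m}$-isometry, the family $(\phi_j)$ is equicontinuous on $G_0$. Because $G_0$ is open in $G$ and $\bigcup_n F_n$ is dense in $G$, the set $\{g \in X_m : d_G(g,1_G) < \delta\}$ is dense in the ball $B_\delta = \{g \in G_0 : d_G(g,1_G) < \delta\}$. A standard Cauchy argument using equicontinuity then upgrades pointwise convergence of $(\phi_j)$ from the dense subset to all of $B_\delta$, with limit $g \mapsto \pi^S_m(vgv^{-1})$. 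For the third stage, since $G_0$ is connected it is algebraically generated by $B_\delta$, so each $g \in G_0$ factors as $g = g_1 \cdots g_k$ with $g_i \in B_\delta$; then $\phi_j(g) = \phi_j(g_1) \cdots \phi_j(g_k) \to \pi^S_m(vg_1v^{-1}) \cdots \pi^S_m(vg_kv^{-1}) = \pi^S_m(vgv^{-1})$.

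The main obstacle is the first stage: extracting the two-term conjugation limit from the four-factor statement of Lemma~\ref{lemm pre conjuat}. The two-sided invariance of $d_{H_m}$ is the key input, since it lets me tame the error terms $\varepsilon_j$ under conjugation by the possibly unbounded sequence $b_j$ without needing to know anything about $\lim_j b_j$ itself.
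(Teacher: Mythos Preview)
Your proposal is correct and follows essentially the same three-stage route as the paper: derive the identity for small $g\in G_0\cap\bigcup_n F_n$ from Lemma~\ref{lemm pre conjuat}, extend to the full ball $U=\{g\in G_0:d_G(g,1_G)<\delta\}$ by density and continuity, then propagate to all of $G_0$ using $G_0=\bigcup_i U^i$. Your Stage~1 spells out explicitly (via the error terms $\varepsilon_j$ and the two-sided invariance of $d_{H_m}$) what the paper compresses into the single clause ``Recall that $\pi^S_m(g)=\lim_j\pi_m(S_{n_j}(g))$\ldots, so\ldots'', and your Stage~2 names equicontinuity where the paper just says ``by the continuity of $\pi^S_m$''; these are expository elaborations of the same argument rather than a different approach.
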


\begin{proof}
By Lemma~\ref{lemm pre conjuat}, there is a real number $0<\delta<\delta_m^*$ so that, for any $g\in X_m=\{g\in G_0\cap\bigcup_n F_n:d_G(g,1_G)<\delta_m^*\}$,
$$d_G(g,1_G)<\delta\Rightarrow \lim_j \pi_m(S_{n_{j-1}}(vgv^{-1})S_{n_{j}}(v)S_{n_{j+1}}(g^{-1})S_{n_{j}}(v)^{-1})=1_{H_m}.$$
Let $U=\{g\in G_0:d_G(g,1_G)<\delta\}$.
Recall that $$\pi^S_m(g)=\lim_{j}\pi_m(S_{n_{j}}(g))$$ for all $g\in G_0\cap\bigcup_n F_n$, so
$$\pi^S_m(vgv^{-1})=\lim_j\pi_m(S_{n_j}(v))\pi^S_m(g)\pi_m(S_{n_j}(v))^{-1})\,(\forall g\in U\cap X_m).$$
Note that $\bigcup_n F_n$ is dense in $G$. Then by the continuity of $\pi^S_m$, we have $$\pi^S_m(vgv^{-1})=\lim_j\pi_m(S_{n_j}(v))\pi^S_m(g)\pi_m(S_{n_j}(v))^{-1})\,(\forall g\in U).$$
Then $\pi^S_m(vgv^{-1})=\lim_j\pi_m(S_{n_j}(v))\pi^S_m(g)\pi_m(S_{n_j}(v))^{-1})$ for all $g\in G_0$. This is because, $\pi^S_m$ is a homomorphism and $G_0$ is connected~($G_0=\bigcup_i U^i$).
\end{proof}

\begin{lemma}
For any $g\in G_0$, we have
$$S(vgv^{-1})=\lim_jS_{n_j}(v)S(g)S_{n_j}(v)^{-1}.$$
\end{lemma}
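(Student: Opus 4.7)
The plan is to lift the coordinatewise convergence already proven in Lemma~\ref{lemm mconve} to convergence in $H$ with respect to the two-sided invariant metric $d_H=\sum_m 2^{-m}d_{H_m}$. Because $H$ is a closed subgroup of $\prod_m H_m$ carrying this weighted sum metric, and each $d_{H_m}\le 1$, convergence in $(H,d_H)$ is literally equivalent to simultaneous convergence in every coordinate $(H_m,d_{H_m})$; this is the only analytic fact I will need beyond what is already in place.

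First I would unpack the right-hand side coordinatewise. For each $m\in\omega$, the canonical projection $\pi_m$ is a continuous homomorphism, so
$$\pi_m\bigl(S_{n_j}(v)S(g)S_{n_j}(v)^{-1}\bigr)=\pi_m(S_{n_j}(v))\,\pi^S_m(g)\,\pi_m(S_{n_j}(v))^{-1},$$
using $\pi_m\circ S=\pi^S_m$ on $G_0$. Similarly the left-hand side has $m$-th coordinate $\pi^S_m(vgv^{-1})$. Thus Lemma~\ref{lemm mconve} yields, for every fixed $m$,
$$\lim_j d_{H_m}\!\Bigl(\pi_m\bigl(S_{n_j}(v)S(g)S_{n_j}(v)^{-1}\bigr),\,\pi_m(S(vgv^{-1}))\Bigr)=0.$$

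It then remains to assemble these coordinatewise limits into a single limit in $d_H$. Given $\varepsilon>0$, choose $M$ with $\sum_{m>M}2^{-m}<\varepsilon/2$; using $d_{H_m}\le 1$, the tail of the series $\sum_m 2^{-m}d_{H_m}(\cdot,\cdot)$ beyond $M$ is always bounded by $\varepsilon/2$ regardless of $j$. For each of the finitely many $m\le M$, the coordinatewise statement above supplies a threshold beyond which the $m$-th term is less than $\varepsilon/(2(M+1))$, and taking the maximum of these thresholds gives
$$\lim_j d_H\!\bigl(S_{n_j}(v)S(g)S_{n_j}(v)^{-1},\,S(vgv^{-1})\bigr)=0,$$
which is the desired conclusion. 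There is no real obstacle in this lemma; the difficulty was concentrated in Lemma~\ref{lemm mconve} (and before it, the strongly-NSS combinatorics of Lemma~\ref{lemm pre conjuat}), while the present statement is essentially a bookkeeping passage from the Lie-factor metrics $d_{H_m}$ to the ambient metric $d_H$.
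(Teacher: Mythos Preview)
Your argument is correct and is essentially the same as the paper's: the paper simply remarks that $S=(\pi^S_0,\pi^S_1,\ldots)$ and that $H$ carries the pointwise convergence topology, so the result follows immediately from Lemma~\ref{lemm mconve}. You have just written out explicitly the standard $\varepsilon/2$-tail argument that underlies this remark.
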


\begin{proof}
Note that the map $S: G_0\rightarrow H$ is defined by
$$S(g)=(\pi^S_0(g),\pi^S_1(g),\ldots,\pi^S_m(g),\ldots).$$
Then this follows immediately from Lemma~\ref{lemm mconve}, since $H$ is equipped with the pointwise convergence topology.
\end{proof}

\begin{theorem}\label{the 1}
Let $G$ be a TSI Polish group such that $G_0$ is open in $G$, and let $H$ be a pro-Lie TSI Polish group. If $E(G)\leq_BE(H)$, then there exists a continuous homomorphism $\phi:G_0\rightarrow H$ such that $\ker(\phi)$ is non-archimedean and
 $\phi{\rm Inn}_G(G_0)\subseteq\overline{{\rm Inn}_H(H_0)\phi}$ under pointwise convergence topology.
\end{theorem}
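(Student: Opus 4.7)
The plan is to take $\phi=S$, where $S\colon G_0\to H$ is the continuous homomorphism already produced above (via \cite[Theorem~6.4]{DZtsi}, invoked in the paragraph defining $S$), whose kernel is already known to be non-archimedean. Since $G_0$ is connected and $S$ is continuous, $\phi(G_0)\subseteq H_0$, so $\phi$ indeed maps into $H_0$. The only substantive task is to verify condition~(ii): for every $u\in G$ we must exhibit a sequence $(h_k)\in H^\omega$ with $\phi(ugu^{-1})=\lim_k h_k\phi(g)h_k^{-1}$ for all $g\in G_0$. Note $H_0$ is characteristic in $H$, so conjugation by any element of $H$ restricts to an element of ${\rm Inn}_H(H_0)$.

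The previous lemma already handles the case $u=v\in\bigcup_n F_n$: the sequence $h_j:=S_{n_j}(v)\in H$ works pointwise on all of $G_0$. To pass to arbitrary $u\in G$, I will use density of $\bigcup_n F_n$ in $G$ together with a diagonal argument. Fix $u\in G$, choose $v_k\to u$ in $\bigcup_n F_n$, and fix a countable dense subset $D=\{g_1,g_2,\ldots\}$ of $G_0$. For each $k$, by the previous lemma applied to $v_k$ and each $g_i$ with $i\le k$, we can pick $j_k$ so that
\[
d_H\!\bigl(S_{n_{j_k}}(v_k)\,\phi(g_i)\,S_{n_{j_k}}(v_k)^{-1},\ \phi(v_kg_iv_k^{-1})\bigr)<\tfrac{1}{k}\qquad(i\le k).
\]
Setting $h_k:=S_{n_{j_k}}(v_k)\in H$, the triangle inequality together with $\phi(v_kg_iv_k^{-1})\to\phi(ug_iu^{-1})$ (by continuity of $\phi$ and of conjugation, using that $d_G$ is two-sided invariant) yields $h_k\phi(g_i)h_k^{-1}\to\phi(ug_iu^{-1})$ for every $g_i\in D$.

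To upgrade pointwise convergence on $D$ to pointwise convergence on the whole of $G_0$, I will exploit that the two-sided invariance of $d_H$ makes every conjugation map $h\mapsto h_k h h_k^{-1}$ an isometry of $H$, so the family $\{\iota_{h_k}\}$ is equicontinuous; moreover $\phi$ is uniformly continuous on the Polish group $G_0$ (again by two-sided invariance of $d_G$ and of $d_H$). Given $g\in G_0$ and $\varepsilon>0$, pick $g_i\in D$ with $d_G(g,g_i)$ small enough that $d_H(\phi(g),\phi(g_i))<\varepsilon/3$ and $d_H(\phi(ugu^{-1}),\phi(ug_iu^{-1}))<\varepsilon/3$ (the latter uses $d_G(ugu^{-1},ug_iu^{-1})=d_G(g,g_i)$); a standard $3\varepsilon$-argument then gives $d_H(h_k\phi(g)h_k^{-1},\phi(ugu^{-1}))<\varepsilon$ for all large $k$.

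The main obstacle is precisely this last diagonal/equicontinuity step: the previous lemma only asserts, for each dense $v$, existence of an approximating sequence depending on $v$, whereas (ii) demands a single sequence in $H$ that simultaneously handles every $g\in G_0$. The separability of $G_0$, the uniform continuity of $\phi$, and the isometric nature of conjugation in $(H,d_H)$ are exactly what make the diagonalization go through cleanly.
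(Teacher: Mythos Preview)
Your argument is correct, but it takes a longer road than the paper does. The diagonal/equicontinuity step you flag as ``the main obstacle'' is in fact unnecessary once you remember that $G_0$ is \emph{open} in $G$.

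The paper's route is this: since $G_0$ is open and $\bigcup_n F_n$ is dense, one can choose a transversal $T_G\subseteq\bigcup_n F_n$ for the cosets of $G_0$. Now any $u\in G$ decomposes as $u=vg_*$ with $v\in T_G$ and $g_*\in G_0$. Then, using only the homomorphism property of $S$ on $G_0$ together with the previous lemma applied to the fixed $v$,
\[
S(ugu^{-1})=S\bigl(v\,(g_*gg_*^{-1})\,v^{-1}\bigr)=\lim_j S_{n_j}(v)\,S(g_*)\,S(g)\,S(g_*)^{-1}\,S_{n_j}(v)^{-1},
\]
so $h_j:=S_{n_j}(v)S(g_*)$ witnesses $\phi\iota_u\in\overline{{\rm Inn}_H(H_0)\phi}$ directly. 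No approximation of $u$ by elements of $\bigcup_n F_n$, no dense subset of $G_0$, and no diagonalization are needed.

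What your approach buys is robustness: your argument would still go through (with minor adjustments) even without the openness of $G_0$, relying only on density of $\bigcup_n F_n$, continuity of $\phi$, and the isometric nature of conjugation in a TSI group. The paper's approach is shorter precisely because it exploits the hypothesis that $G_0$ is open to reduce to a single $v\in\bigcup_n F_n$ per coset and then leverages the fact that $S$ is already a homomorphism on $G_0$.
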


\begin{proof}
We fix a set $T_G\subseteq G$ such that $T_G$ meets each coset of $G_0$ at exactly one point.
Suppose $E(G)\leq_BE(H)$. By the preceding arguments and Lemmas, there exists a continuous homomorphism $S:G_0\rightarrow H$ such that $\ker(S)$ is non-archimedean and
$$S(vgv^{-1})=\lim_jS_{n_j}(v)S(g)S_{n_j}(v)^{-1}$$
for all $v\in T_G$ and $g\in G_0$. Let $u\in G$ with $u=vg_*$, where $v\in T_G,g_*\in G_0$.
Then for any $g\in G_0$,
$$\begin{aligned}
S(ugu^{-1})=S(vg_*gg_*^{-1}v^{-1})&=\lim_jS_{n_j}(v)S(g_*gg_*^{-1})S_{n_j}(v)^{-1} \\
&=\lim_jS_{n_j}(v)S(g_*)S(g)S(g_*)^{-1}S_{n_j}(v)^{-1}.
  \end{aligned}$$
This shows that  $S{\rm Inn}_G(G_0)\subseteq\overline{{\rm Inn}_H(H_0)S}$.
So $\phi=S$ is as required.
\end{proof}

\begin{theorem}[Rigid theorem on locally compact TSI Polish groups admitting open identity component]\label{main the}
Let $G$ be a locally compact TSI Polish group such that $G_0$ is open in $G$, and let $H$ be a nontrivial pro-Lie TSI Polish group. Then $E(G)\leq_BE(H)$ iff there exists a continuous homomorphism $\phi:G_0\to H_0$ satisfying the following conditions:
\begin{enumerate}
  \item [(i)] $\ker(\phi)$ is non-archimedean;
  \item [(ii)]  $\phi{\rm Inn}_G(G_0)\subseteq\overline{{\rm Inn}_H(H_0)\phi}$ under pointwise convergence topology.
\end{enumerate}
\end{theorem}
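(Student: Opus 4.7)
The plan is to derive this final theorem as an immediate synthesis of Theorem~\ref{cor locall} and Theorem~\ref{the 1}, which between them already supply the two directions; no substantial new argument is needed beyond reconciling the codomain and extracting the stronger conclusion from the ``Moreover'' clause.

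For the only-if direction, I would apply Theorem~\ref{the 1} directly to obtain a continuous homomorphism $\phi:G_0\to H$ with $\ker(\phi)$ non-archimedean and $\phi\,{\rm Inn}_G(G_0)\subseteq\overline{{\rm Inn}_H(H_0)\,\phi}$ in the pointwise topology. The only thing to adjust is that the statement demands the codomain to be $H_0$ rather than $H$; but $G_0$ is connected, $\phi$ is continuous, and $\phi(1_G)=1_H$, so $\phi(G_0)$ is a connected subset of $H$ containing the identity, hence $\phi(G_0)\subseteq H_0$. Thus $\phi$ can be regarded as a map $G_0\to H_0$ satisfying (i) and (ii).

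For the if direction, suppose such a $\phi$ exists. Observe that $G_0$ is open in $G$ by hypothesis and normal in $G$ because it is the identity component, while $H_0$ is always closed and normal in $H$. The hypotheses of Theorem~\ref{cor locall} are therefore met with $G_c=G_0$ and $H_c=H_0$, yielding $E(G)\leq_B E(H)\times E(G;0)$. To strengthen this to $E(G)\leq_B E(H)$, I would invoke the ``Moreover'' clause of Theorem~\ref{cor locall}, which asks for an embedding $[0,1]\hookrightarrow H$. The nontriviality of $H$ combined with its pro-Lie TSI Polish structure supplies such an embedding: if $H_0$ is nontrivial, then $H_0$ is a nontrivial connected pro-Lie Polish group and hence by the Lie-algebraic structure theory for pro-Lie groups (the existence of nontrivial one-parameter subgroups in any nontrivial connected pro-Lie group) contains a homeomorphic copy of $[0,1]$; the degenerate case $H_0=\{1_H\}$ forces $\phi$ trivial and $\ker(\phi)=G_0$ non-archimedean, which, combined with $G_0$ open, makes $G$ itself non-archimedean, so that $E(G)$ collapses to a hyperfinite-type relation that is readily reducible to any nontrivial $E(H)$.

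The main obstacle I anticipate is the last step: cleanly ruling out the pathological edge case where $H_0$ is trivial, and in the generic case producing the interval embedding $[0,1]\hookrightarrow H$ from the structure theorem for connected pro-Lie groups. Apart from that, the theorem is essentially a bookkeeping assembly built on top of the two deep results already established earlier in the paper, Theorem~\ref{cor locall} providing the reduction and Theorem~\ref{the 1} providing the obstruction.
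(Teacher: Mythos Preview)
Your proposal is correct and follows essentially the same route as the paper: the forward direction is Theorem~\ref{the 1} (plus the observation, which you make explicit and the paper leaves implicit, that $\phi(G_0)\subseteq H_0$ by connectedness), and the backward direction is Theorem~\ref{cor locall} together with an embedding $[0,1]\hookrightarrow H$, the latter supplied by the pro-Lie structure theory (the paper cites \cite[Proposition~19]{HM07}).

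One small imprecision in your edge case: from $H_0=\{1_H\}$ you correctly deduce $\ker(\phi)=G_0$ is non-archimedean, but then you pass to ``$G$ non-archimedean, so $E(G)$ collapses to a hyperfinite-type relation.'' That implication fails for general non-archimedean locally compact TSI Polish groups. The missing step is that $G_0$ is also \emph{connected}, and a connected non-archimedean Polish group is trivial; hence $G_0=\{1_G\}$, and since $G_0$ is open, $G$ is countable discrete, giving $E(G)\sim_B E_0\le_B E(H)$. The paper sidesteps this by organizing the case split on whether $G_0$ is trivial rather than on whether $H_0$ is trivial, which is slightly cleaner but amounts to the same thing.
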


\begin{proof}
If $G_0=\{1_G\}$, then $G$ is a countable discrete group. Note that $H$ is nontrivial. Then by Theorem 3.5 of~\cite{DZ}, we have $E_0\sim_B E(G)\leq_B E(H)$. In this case, we let $\phi:G_0\rightarrow H$ with $\phi(1_G)=1_H$.

Suppose $G_0\neq\{1_G\}$. $(\Rightarrow)$. It follows by Theorem~\ref{the 1}.

$(\Leftarrow)$. Let $\phi:G_0\to H_0$ be a continuous homomorphism satisfying conditions (i) and (ii). The condition (i) implies that $H_0$ is nontrivial. Thus the interval [0,1] embeds into $H_0$~(cf.~\cite[Proposition 19]{HM07}).
Then by Theorem~\ref{cor locall}, we get $E(G)\leq_B E(H)$.
\end{proof}

A similar proof gives the following theorem.

\begin{theorem}\label{loca open}
Let $G$ be a TSI Polish group, and let $H$ be a separable TSI Lie group with $\dim(H)\geq 1$.
 If $E(G)\leq_BE(H)$, then there are an open normal subgroup $G_c$ of $G$ and a continuous homomorphism $\phi:G_c\to H$ satisfying the following conditions:
\begin{enumerate}
  \item [(i)] $\ker(\phi)$ is non-archimedean;
  \item [(ii)]  $\phi{\rm Inn}_G(G_c)\subseteq\overline{{\rm Inn}_H(H)\phi}$ under pointwise convergence topology.
  \item [(iii)] the map $\phi^\#$ is a Borel reduction of $E_*(G_c)$ to $E_*(H)\times E(G_c;0)$, where $\phi^\#:G_c^\omega\rightarrow H^\omega\times G_c^\omega$ satisfies $\phi^\#(x)(n)=(\phi(x(n)),x(n))$.
\end{enumerate}
Moreover, if $G$ is locally compact,  then the converse is also true.
\end{theorem}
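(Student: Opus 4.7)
The plan is to mirror the proof of Theorem~\ref{main the}, trading the hypothesis ``$G_0$ is open in $G$'' for the stronger hypothesis that $H$ is a separable TSI Lie group with $\dim(H)\geq 1$. Assume $E(G)\leq_B E(H)$ and install the reduction machinery from Section~3: finite sets $F_n$, sequences $(n_j),(l_j)$, the base word $w$ with partial products $u_{n_j}$, the additive reduction $\psi$, and the maps $T_{n_j},S_{n_j}$. Since $H$ is a separable TSI Lie group, it is locally compact and strongly NSS, so by~\cite[Exercise 2.1.4]{gaobook} one fixes a compact conjugation-invariant identity neighborhood $K\subseteq H_0$ and $\varepsilon^*>0$ with $V=\{h\in H:d_H(h,1_H)<\varepsilon^*\}\subseteq K$ and $V$ unenclosed in $H$. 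Lemmas~\ref{lem K_m}--\ref{lemm mconve} then apply \emph{verbatim} with $H$ in place of the Lie factor $H_m$ and with $\pi_m$ replaced by the identity on $H$.

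Replaying those lemmas yields a $\delta>0$ such that $\phi_0(g):=\lim_j S_{n_j}(g)$ exists for every $g\in U\cap\bigcup_n F_n$, where $U=\{g\in G:d_G(g,1_G)<\delta\}$, and satisfies the conjugation equivariance $\phi_0(vgv^{-1})=\lim_j S_{n_j}(v)\phi_0(g)S_{n_j}(v)^{-1}$ for every $v\in\bigcup_n F_n$ and $g\in U$. Shrinking $U$ as in the proof of Theorem~\ref{cor locall}, one may assume $U=U^{-1}$ and $vUv^{-1}=U$ for all $v\in G$; then $G_c:=\bigcup_i U^i$ is an open normal subgroup of $G$, and $\phi_0$ extends by the homomorphism law and density of $\bigcup_n F_n$ to a continuous homomorphism $\phi:G_c\to H$. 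The non-archimedean property of $\ker(\phi)$ is obtained as in Theorem~6.4 of~\cite{DZtsi}, which relies only on the NSS structure of the target; condition~(ii) is precisely the conjugation equivariance above, choosing $h_n=S_{n_j}(v)$ for a suitable sequence of indices $j$.

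For condition~(iii), the Borel map $\phi^\#(x)(n)=(\phi(x(n)),x(n))$ reduces $E_*(G_c)$ to $E_*(H)\times E(G_c;0)$: the forward implication follows from continuity of $\phi$, while the reverse uses the telescoping argument of Lemma~\ref{the borl red}(2), where the non-archimedean $\ker(\phi)$ absorbs the slack between $\phi$-convergence and $E_*(G_c)$-convergence once $x(n)y(n)^{-1}\to 1_{G_c}$. For the ``moreover'' clause, assume $G$ (hence $G_c$) is locally compact. Since $\dim(H)\geq1$, the interval $[0,1]$ embeds into $H_0$ by~\cite[Proposition 19]{HM07}; applying Theorem~\ref{cor locall} to $(G_c,H,\phi)$ then yields $E(G_c)\leq_B E(H)$, and a standard product-reduction argument over the countable discrete quotient $G/G_c$ (in the spirit of~\cite[Corollary 3.6]{DZ}) upgrades this to $E(G)\leq_B E(H)$.

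The main obstacle is the simultaneous achievement of openness, $G$-normality, and NSS-convergence for $G_c$: the naive shrinking $U\mapsto\bigcap_{v\in G}vUv^{-1}$ preserves normality but may destroy openness when $G$ is not locally compact, so the conjugation-invariant $U$ must be produced from the outset by combining Lemma~\ref{lemm mconve} with a careful initial choice of identity neighborhood. A secondary subtlety is condition~(iii), which is not explicitly recorded in Theorem~\ref{main the} but falls out from the already-open status of $G_c$ in $G$ without any auxiliary transversal bookkeeping like that of Lemma~\ref{the borl red}.
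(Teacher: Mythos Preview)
Your overall strategy matches the paper's: run the Section~3 machinery with the single Lie group $H$ in place of the factor $H_m$, extract an open normal subgroup $G_c$ generated by a small ball, and restrict the limiting homomorphism to it; for the converse, invoke Theorem~\ref{cor locall}. However, there are two genuine missteps in your execution.

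First, you write that ``replaying those lemmas yields a $\delta>0$ such that $\phi_0(g):=\lim_j S_{n_j}(g)$ exists.'' Lemmas~\ref{lem K_m}--\ref{lemm mconve} do not produce this limit; they \emph{consume} it. In the paper's argument the existence of $\lim_j S_{n_j}(g)$ on an open normal subgroup $W\supseteq G_0$, together with the homomorphism property, the non-archimedean kernel, \emph{and} condition~(iii) (that $(S')^\#$ is already a reduction of $E_*(W)$ to $E_*(H)\times E(W;0)$), are all imported in one stroke from \cite[Remark~6.5]{DZtsi} \emph{before} the conjugation lemmas are rerun. Only then are Lemmas~\ref{lem K_m}--\ref{lemm pre conjuat} adapted to prove the equivariance statement, and $G_c\subseteq W$ is cut out afterwards with $\phi=S'|_{G_c}$. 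Your attempt to recover (iii) by redoing the telescoping of Lemma~\ref{the borl red}(2) is therefore unnecessary, and as stated it is incomplete: that argument needs the local homeomorphism hypothesis (iii) of Lemma~\ref{the borl red}, which you never verify. Relatedly, your final worry about conjugation-invariance of $U$ is a non-issue: since $d_G$ is two-sided invariant, the ball $\{g\in W:d_G(g,1_G)<\delta'\}$ is automatically conjugation-invariant, and this is exactly how the paper defines $U'$.

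Second, your handling of the converse is roundabout and the ``upgrade'' step is not actually valid as stated. Knowing only $E(G_c)\leq_B E(H)$ and that $G/G_c$ is countable discrete does \emph{not} by itself yield $E(G)\leq_B E(H)$; the conjugation action of $G$ on $G_c$ is precisely the obstruction, and \cite[Corollary~3.6]{DZ} says nothing of the sort. The paper simply applies Theorem~\ref{cor locall} with the pair $(G,G_c)$ and condition~(ii) to obtain $E(G)\leq_B E(H)$ directly, using that $\dim(H)\geq 1$ forces $[0,1]\hookrightarrow H$.
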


\begin{proof}
Let $d_G,d_{H}\leq 1$ be compatible complete two-sided invariant metrics on $G$ and $H$ respectively. Assume that $E_*(G)\le_BE_*(H)$.
We use the notation defined in the arguments before Definition~\ref{S_n_j}. By~\cite[Remark 6.5]{DZtsi}, there exist an open normal subgroup $W \supseteq G_0$ of $G$ and an uniformly  continuous homomorphism $S':W\rightarrow H$ such that $\ker(S')$ is non-archimedean, where $S'(g)=\lim_{j}S_{n_{j}}(g)$ for all $g\in W\cap\bigcup_n F_n$. The map ${(S')}^\#$ is a reduction of $E_*(W)$ to $E_*(H)\times E(W;0)$.

Then fix a compact neighborhood $K'\subseteq H$ of $1_H$ and some $\epsilon_*>0$ so that $V=\{h\in H: d_H(h,1_H)\leq\epsilon_*\}\subseteq K'$ and $hK'h^{-1}=K'$ holds for each $h\in H$.  Since $H$ is strongly NSS, we let $\epsilon_*$ be sufficiently small so that $V$ is an unenclosed set of $H$.

By Lemma~\ref{continuous property}, we can find a $\delta_*>0$ such that
$$\forall^\infty n\in I\,\forall g\in F_{n-1}\,(d_G(g,1_G)<\delta_*\Rightarrow d_{H}(S_{n}(g),1_{H})<8^{-1}\epsilon_*).$$

Denote $X=\{g\in W\cap\bigcup_n F_n:d_G(g,1_G)<\delta_*\}$. We repeat the proofs of Lemmas~\ref{lem K_m},~\ref{lemm g conge} and Lemma~\ref{lemm pre conjuat} to establish the following fact:

There is a real number $0<\delta'<\delta_*$ so that, for any $g\in X$ and $v\in \bigcup_n F_n$,
$$d_G(g,1_G)<\delta'\Rightarrow \lim_j S_{n_{j-1}}(vgv^{-1})S_{n_{j}}(v)S_{n_{j+1}}(g^{-1})S_{n_{j}}(v)^{-1}=1_{H}.$$
We set $U'=\{g\in W:d_G(g,1_G)<\delta'\}$ and $G_c=\bigcup_i (U')^i$. By the continuity of $S'$, we have $S'(vgv^{-1})=\lim_j S_{n_j}(v)S'(g)S_{n_j}(v)^{-1}$ for all $g\in G_c,v\in\bigcup_n F_n$. Let $\phi=S'|_{G_c}$. Note that $G_c$ is an open normal subgroup of $G$.
Since $\bigcup_n F_n$ is dense, we can find a set $T'_G\subseteq \bigcup_n F_n$ such that $T'_G$ meets each coset of $G_c$ at exactly one point. A similar proof of Theorem~\ref{the 1} gives that $\phi{\rm Inn}_G(G_c)\subseteq\overline{{\rm Inn}_H(H)\phi}$ under pointwise convergence topology. So $\phi$ and $G_c$ are as required.

On the other hand, we suppose $G$ is locally compact. Let $\phi:G_c\to H$ be a continuous homomorphism satisfying conditions (i) and (ii). By Theorem~\ref{cor locall}, we have $E(G)\leq_B E(H)$.
\end{proof}

All Lie groups are pro-Lie and any non-archimedean closed subgroup of a Lie group is countable.
Using Theorem~\ref{loca open} and these facts, we obtain the following result.

\begin{corollary}\label{Rn}
Let $G$ and $H$ be two separable TSI Lie groups. Then $E(G)\leq_BE(H)$ iff there exists a continuous locally injective homomorphism $\phi:G_0\rightarrow H_0$ such that $\phi{\rm Inn}_G(G_0)\subseteq\overline{{\rm Inn}_H(H_0)\phi}$ under pointwise convergence topology. If $\dim(G)=\dim(H)$, then the map $\phi$ is surjective. Moreover,
$\phi$ is also a topological group isomorphism when $G_0\cong H_0\cong\R^n$.
\end{corollary}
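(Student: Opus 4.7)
My plan is to derive Corollary~\ref{Rn} directly from Theorem~\ref{loca open} (for $\Rightarrow$) and Theorem~\ref{main the} (for $\Leftarrow$), using only the two cited facts that every Lie group is pro-Lie and that every non-archimedean closed subgroup of a Lie group is countable. The key observation is that in the Lie group setting ``$\phi$ is locally injective'' is equivalent to ``$\ker(\phi)$ is non-archimedean'': a discrete subgroup is trivially non-archimedean, and conversely a countable closed subgroup of a locally compact Hausdorff group is discrete by Baire category, so a non-archimedean closed subgroup of a Lie group is already discrete, which yields local injectivity. This equivalence is what translates the hypothesis of Corollary~\ref{Rn} into the hypothesis of the underlying rigid theorems.

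For ($\Rightarrow$), assuming $\dim(H)\geq 1$, I would apply Theorem~\ref{loca open} to obtain an open normal subgroup $G_c$ of $G$ and a continuous homomorphism $\phi:G_c\to H$ satisfying (i)--(iii). Since $G_c$ is an open subgroup of $G$ and $G_0$ is connected, $G_0\subseteq G_c$, and continuity together with connectedness of $G_0$ forces $\phi(G_0)\subseteq H_0$. The restriction $\phi|_{G_0}:G_0\to H_0$ has kernel $\ker(\phi)\cap G_0$, a closed non-archimedean subgroup of the Lie group $G_0$, hence discrete by the observation above, so $\phi|_{G_0}$ is locally injective. Condition~(ii) of Theorem~\ref{loca open} gives, for each $u\in G$, a sequence $(h_n)$ in $H$ with $\phi(ugu^{-1})=\lim_n h_n\phi(g)h_n^{-1}$ for $g\in G_c$; restricting $g$ to $G_0$ yields the desired $\phi|_{G_0}{\rm Inn}_G(G_0)\subseteq\overline{{\rm Inn}_H(H_0)\phi|_{G_0}}$. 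For ($\Leftarrow$), both $G$ and $H$ are Lie groups, so $G$ is locally compact with $G_0$ open and $H$ is pro-Lie; the assumed locally injective $\phi:G_0\to H_0$ has discrete, hence non-archimedean, kernel, and condition~(ii) is given, so Theorem~\ref{main the} delivers $E(G)\leq_B E(H)$.

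For the last two assertions, a continuous homomorphism between Lie groups is smooth, and local injectivity of $\phi:G_0\to H_0$ is equivalent to injectivity of the differential $d\phi_1:T_1 G_0\to T_1 H_0$. If $\dim(G)=\dim(H)$, then $d\phi_1$ is a linear map between vector spaces of equal dimension, hence a linear isomorphism, so the inverse function theorem makes $\phi$ a local diffeomorphism at the identity; then $\phi(G_0)$ is an open subgroup of the connected group $H_0$, forcing $\phi(G_0)=H_0$. In the special case $G_0\cong H_0\cong\R^n$, every continuous homomorphism $\R^n\to\R^n$ is $\R$-linear, and local injectivity of a linear map forces trivial kernel (otherwise the kernel contains a whole line, contradicting injectivity on any neighborhood of $0$); thus $\phi$ is a linear bijection of $\R^n$ and therefore a topological group isomorphism. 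The only delicate point is the ($\Rightarrow$) bookkeeping---verifying $G_0\subseteq G_c$ and that conditions~(i)--(ii) transfer faithfully under restriction to $G_0$---and no new analytic content beyond the cited fact about non-archimedean subgroups of Lie groups is required.
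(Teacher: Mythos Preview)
Your argument is correct and, for the main equivalence, follows the paper's route: both directions are pulled from Theorem~\ref{loca open} (the paper) or the pair Theorem~\ref{loca open}/Theorem~\ref{main the} (you), with the translation ``$\ker(\phi)$ non-archimedean $\Leftrightarrow$ $\phi$ locally injective'' supplied by the cited fact that non-archimedean closed subgroups of Lie groups are countable (hence discrete). Your explicit Baire step and the verification that conditions (i)--(ii) survive restriction to $G_0$ are exactly the bookkeeping the paper leaves implicit in the sentence ``Then it follows by the above facts and Theorem~\ref{loca open}.'' Both treatments silently assume $\dim(H)\ge 1$ (equivalently $H$ nontrivial), so your parenthetical ``assuming $\dim(H)\ge 1$'' matches the paper rather than omits something it covers.

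Where you diverge is in the two supplementary assertions. For surjectivity when $\dim(G)=\dim(H)$, the paper invokes \cite[Lemma~1]{MO} to conclude that a continuous locally injective homomorphism is open, hence surjective onto the connected $H_0$; you instead pass through the differential, use the inverse function theorem to get a local diffeomorphism at the identity, and then argue that the image is an open subgroup of a connected group. For the $\R^n$ case, the paper exploits divisibility (every $g\in\R^n$ has a root $g'$ in the neighborhood of local injectivity, and torsion-freeness of $\R^n$ transfers $\phi(g)=1$ to $\phi(g')=1$), whereas you use that continuous homomorphisms $\R^n\to\R^n$ are linear and that a nontrivial linear kernel contains a line, contradicting local injectivity. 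Your arguments are self-contained within elementary Lie theory and avoid the external citation; the paper's are shorter but rely on \cite{MO}. Both are valid.
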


\begin{proof}
Suppose $\dim(G)=\dim(H)$. By~\cite[Lemma 1]{MO}, any continuous locally injective homomorphism $\phi:G_0\rightarrow H_0$ is open. Note that $H_0$ is connected, so $\phi$ is also surjective. Next we assume $G_0\cong H_0\cong \R^n$.

Fix an open neighborhood $V\subseteq G_0$ of $1_G$ such that $\phi|_{V}$ is injective. For any $g\in G_0$, we can find an element $g'\in V$ and a natural number $n$ with $g=(g')^n$.  Since $G_0\cong\R^n$, we have $\phi(g)=1_H\iff \phi(g')=1_H\iff g'=1_G\iff g=1_G$. This shows that $\phi$ is injective, and thus $\phi$ is a topological isomorphism.
Then it follows by the above facts and Theorem~\ref{loca open}.
\end{proof}
\begin{lemma}\label{the fint}
Let $G$ and $H$ be two TSI Polish groups, and let $\phi:G_0\rightarrow H_0$ be a continuous locally injective homomorphism. Suppose $H$ is compact or $|{\rm Inn}_H(H_0)|<+\infty$. Then $\phi{\rm Inn}_G(G_0)\subseteq\overline{{\rm Inn}_H(H_0)\phi}$ under pointwise convergence topology  iff $\phi{\rm Inn}_G(G_0)\subseteq {\rm Inn}_H(H_0)\phi$.
\end{lemma}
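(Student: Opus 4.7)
The $(\Leftarrow)$ direction is immediate: any set is contained in its own closure, so $\phi{\rm Inn}_G(G_0)\subseteq{\rm Inn}_H(H_0)\phi$ gives $\phi{\rm Inn}_G(G_0)\subseteq\overline{{\rm Inn}_H(H_0)\phi}$ for free. Thus the entire content of the lemma is the $(\Rightarrow)$ direction, and for this the plan is to prove the stronger statement that ${\rm Inn}_H(H_0)\phi$ is \emph{already closed} in $H_0^{G_0}$ under the product (pointwise) topology, under either hypothesis. Since $H_0$ is Hausdorff (as a Polish group), the ambient space $H_0^{G_0}$ is Hausdorff, which I will use in both cases.

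For the case $|{\rm Inn}_H(H_0)|<+\infty$, I would observe that the assignment $\iota_h\mapsto\iota_h\phi$ is a surjection from ${\rm Inn}_H(H_0)$ onto ${\rm Inn}_H(H_0)\phi$, so the latter is a finite subset of $H_0^{G_0}$. Finite subsets of Hausdorff spaces are closed, and therefore $\overline{{\rm Inn}_H(H_0)\phi}={\rm Inn}_H(H_0)\phi$, from which the conclusion follows.

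For the case that $H$ is compact, I would consider the evaluation-style map $F:H\to H_0^{G_0}$ defined by $F(u)(g)=u\phi(g)u^{-1}$. The image is well-defined in $H_0^{G_0}$ because $H_0$ is a normal subgroup of $H$ and $\phi(g)\in H_0$. For each fixed $g\in G_0$, the coordinate map $u\mapsto u\phi(g)u^{-1}$ from $H$ to $H_0$ is continuous by joint continuity of the group operations; hence $F$ is continuous into the product topology on $H_0^{G_0}$. Since $H$ is compact, $F(H)={\rm Inn}_H(H_0)\phi$ is the continuous image of a compact space in a Hausdorff space, hence compact and therefore closed. Once again $\overline{{\rm Inn}_H(H_0)\phi}={\rm Inn}_H(H_0)\phi$.

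I do not foresee a serious obstacle: the lemma reduces entirely to closedness of the orbit ${\rm Inn}_H(H_0)\phi$ in the product topology, and each of the two hypotheses delivers this by a soft general-topology argument (finiteness, or continuous image of a compact set). The local injectivity of $\phi$ is not invoked in this argument; it is presumably listed because the lemma will be combined with results such as Corollary~\ref{Rn} where local injectivity is already in force.
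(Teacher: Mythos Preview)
Your argument is correct and is precisely the kind of routine topological verification the paper has in mind: the paper's own proof consists of the single sentence ``Its proof is trivial.'' Your write-up simply unpacks that triviality by showing ${\rm Inn}_H(H_0)\phi$ is closed in the pointwise topology in each case (finite set in a Hausdorff space, resp.\ continuous image of a compact set in a Hausdorff space), and your observation that local injectivity of $\phi$ plays no role is accurate.
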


\begin{proof}
Its proof is trivial.
\end{proof}

For the case of $G_0\cong \R^n$, we have a positive answer to Question 7.5 of~\cite{DZ}.

\begin{corollary}
\label{the Rn finit}
Let $G$ and $H$ be two separable TSI Lie groups such that $G_0\cong H_0\cong \R^n$ and $|{\rm Inn}_H(H_0)|<+\infty$. Then $E(G)\leq_BE(H)$ iff there exists a topological isomorphism $\phi:G_0\rightarrow H_0$ such that $\phi{\rm Inn}_G(G_0)\phi^{-1}\subseteq {\rm Inn}_H(H_0)$.
\end{corollary}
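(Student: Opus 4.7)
The plan is to deduce this corollary by chaining together Corollary~\ref{Rn} and Lemma~\ref{the fint}, exploiting the particular structure of $\R^n$ to upgrade the homomorphism produced by Corollary~\ref{Rn} to an isomorphism and to collapse the pointwise closure in the inner-automorphism condition. No new analysis of Borel reductions is needed; all the technical work was done in the preceding results.

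For the forward direction, I will assume $E(G)\leq_B E(H)$ and invoke Corollary~\ref{Rn} to obtain a continuous locally injective homomorphism $\phi:G_0\to H_0$ satisfying $\phi{\rm Inn}_G(G_0)\subseteq\overline{{\rm Inn}_H(H_0)\phi}$ under pointwise convergence topology. Since $\dim(G_0)=\dim(H_0)=n$, the \emph{Moreover} clause of Corollary~\ref{Rn} promotes $\phi$ to a topological group isomorphism $G_0\cong\R^n\to H_0\cong\R^n$. Next, because $|{\rm Inn}_H(H_0)|<+\infty$, Lemma~\ref{the fint} applies and collapses the closure: the inclusion becomes $\phi{\rm Inn}_G(G_0)\subseteq{\rm Inn}_H(H_0)\phi$. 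Right-multiplying both sides by $\phi^{-1}$, which is legitimate as $\phi$ is now an isomorphism, yields the desired $\phi{\rm Inn}_G(G_0)\phi^{-1}\subseteq{\rm Inn}_H(H_0)$.

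For the reverse direction, I will take a topological isomorphism $\phi:G_0\to H_0$ with $\phi{\rm Inn}_G(G_0)\phi^{-1}\subseteq{\rm Inn}_H(H_0)$ and simply read the steps backwards. The inclusion rewrites as $\phi{\rm Inn}_G(G_0)\subseteq{\rm Inn}_H(H_0)\phi$, hence trivially as $\phi{\rm Inn}_G(G_0)\subseteq\overline{{\rm Inn}_H(H_0)\phi}$. Being a topological isomorphism, $\phi$ is in particular a continuous locally injective homomorphism (global injectivity suffices). Corollary~\ref{Rn} then gives $E(G)\leq_B E(H)$.

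There is essentially no obstacle in this argument; it is purely a bookkeeping exercise combining Corollary~\ref{Rn} with Lemma~\ref{the fint} under the special hypotheses $G_0\cong H_0\cong\R^n$ and $|{\rm Inn}_H(H_0)|<+\infty$. The only small point worth double-checking is that the dimension hypothesis is correctly used so that Corollary~\ref{Rn} delivers an actual topological isomorphism rather than only an open surjection, but this is precisely the content of the $\R^n$ case already settled in that corollary.
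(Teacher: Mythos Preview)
Your proposal is correct and follows exactly the same approach as the paper, which simply states ``It follows by Theorem~\ref{Rn} and Lemma~\ref{the fint}.'' You have merely spelled out in detail the bookkeeping that the paper leaves implicit.
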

\begin{proof}
It follows by Theorem~\ref{Rn} and Lemma~\ref{the fint}.
\end{proof}

The following result provide a negative answer to Question 7.5 of~\cite{DZ}.
\begin{theorem}
Let $G=\Gamma_k\ltimes\R^2$ and $H={\Gamma}^z\ltimes\R^2$. Then $E(G)<_B E(H)$, but there is no topological isomorphism $\phi:G_0\to H_0$ such that $\phi{\rm Inn}_G(G_0)\phi^{-1}\subseteq{\rm Inn}_H(H_0)$.
\end{theorem}

\begin{proof}
Clearly, we have ${\rm Inn}_G(G_0)\subseteq\overline{{\rm Inn}_H(H_0)}$ and $k=|{\rm Inn}_G(G_0)|<|{\rm Inn}_H(H_0)|=\infty$. By Corollary~\ref{Rn}, we see that $E(\Gamma_k\ltimes\R^2)<_B E({\Gamma}^z\ltimes\R^2)$.

Assume toward a contradiction that there is a topological isomorphism $\phi:G_0\to H_0$ such that $\phi{\rm Inn}_G(G_0)\phi^{-1}\subseteq{\rm Inn}_H(H_0)$. Note that $\phi{\rm Inn}_G(G_0)\phi^{-1}$ is a finite group. However, any nontrivial subgroup of ${\rm Inn}_H(H_0)$ is infinite. This is a contradiction!
\end{proof}

\subsection*{Acknowledgements}
The author would like to Longyun Ding for carefully reading and suggestions on proofs of the article.

\end{document}